\newcommand{\rs}{{\mathcal R\,}}
\newcommand{\hs}{{\mathcal H\,}}
\newcommand{\ws}{{\mathcal W\,}}
\newtheorem{definition}{Definition}
\newtheorem{theorem}{Theorem}
\newtheorem{lemma}{Lemma}
\newtheorem{corollary}{Corollary}
\theoremstyle{remark}
\newtheorem{remark}[theorem]{Remark}
\numberwithin{equation}{section}
\begin{document}

\title{New inequalities for weaving frames in Hilbert spaces}

\author{Dongwei Li}
\address{School of Mathematics, HeFei University of Technology, 230009, P. R. China}
\email{dongweili@huft.edu.cn}

\author{Jinsong Leng}
\address{School of Mathematical Sciences, University of Electronic Science and Technology of China, 611731, P. R. China}
\email{jinsongleng@126.com}
\subjclass[2000]{42C15, 47B99}



\keywords{weaving frames, frames, Hilbert spaces}

\begin{abstract}
	In this paper, we establish Parseval identities and surprising new inequalities for weaving frames in Hilbert space, which involve scalar $\lambda\in\rs$. By suitable choices of  $\lambda$, one obtains the previous results as special cases. Our results generalize and improve the remarkable results which have been obtained by Balan et al. and G\u{a}vru\c{t}a.

\end{abstract}

\maketitle

\section{Introduction}
Frames in Hilbert spaces were first introduced in 1952 by Duffin and Schaeffer \cite{duffin1952class} to study some deep problems in nonharmonic Fourier series,  reintroduced in 1986 by Daubechies, Grossmann and Meyer \cite{daubechies1986painless}, and today frames play important roles in many applications in several areas of mathematics, physics, and engineering, such as coding theory \cite{leng2011optimal,li2018frame}, sampling theory \cite{zhao2006perturbation,sun2017nonlinear}, quantum measurements \cite{eldar2002optimal},filter bank theory \cite{kovacevic2002filter} and image processing \cite{donoho2005continuous}.

Let $\hs$ be a separable space and $I$ a countable index set. A sequence  $\{\phi_i\}_{i\in I}$ of elements of $\hs$ is a frame for $\hs$ if there exist constants $A,~B>0$ such that
$$A\|f\|^2\le \sum_{i\in I}\left| \left\langle f,\phi_i\right\rangle \right| ^2\le B\|f\|^2,~~~\forall f\in\hs.$$
The number $A,~B$ are called lower and upper frame bounds, respectively. If $A=B$, then this frame is called an $A$-tight frame, and if $A=B=1$, then it is called a Parseval frame.

Suppose  $\{\phi_i\}_{i\in I}$ is a frame for $\hs$, then the frame operator is a self-adjoint positive invertible operators, which is given by
$$S:\hs\rightarrow\hs,~~Sf=\sum_{i\in I}\left\langle f,\phi_i\right\rangle \phi_i.$$ 
The following reconstruction formula holds:
$$f=\sum_{i\in I}\left\langle f,\phi_i\right\rangle S^{-1}\phi_i=\sum_{i\in I}\left\langle f,S^{-1}\phi_i\right\rangle \phi_i,$$
where the family $\{\widetilde{\phi}_i\}_{i\in I}=\{S^{-1}\phi_i\}_{i\in I}$ is also a frame for $\hs$, which is called the canonical dual frame of $\{\phi_i\}_{i\in I}$.
The frame $\{\varphi_i\}_{i\in I}$ for $\hs$ is called an alternate dual frame of $\{\phi_i\}_{i\in I}$ if the following formula holds:
$$f=\sum_{i\in I}\left\langle f,\phi_i\right\rangle \varphi_i=\sum_{i\in I}\left\langle f,\varphi_i\right\rangle \phi_i$$
for all $f\in\hs$ \cite{han2000frames}.

Let $\{\phi_i\}_{i\in I}$ be a frame for $\hs$, for every $J\subset I$, we define the operator
$$S_J=\sum_{i\in J}\left\langle f,\phi_i\right\rangle \phi_i,$$
and denote $J^c=I\setminus J$.

The concept of discrete weaving frames for separable Hilbert spaces was introduced by Bemrose, Casazza et. al. \cite{bemrose2015weaving}, which is motivated by distributed signal processing. For example, in wireless sensor networks where frames may be subjected to distributed processing under different frames. Thus, weaving frames have potential applications in wireless sensor networks that require distributed processing under different frames, as well as preprocessing of signals using Gabor frames. Recently, weaving frames in Hilbert spaces have been studied intensively, for more details see \cite{casazza2016weaving,khosravi2018weaving,vashisht2016weaving}

\begin{definition}
Two frames $\{\phi_i\}_{i\in I}$ and $\{\psi_i\}_{i\in I}$  for a Hilbert space $\hs$ is said to be woven if there are universal constants $A$ and $B$ so that for every partition $\sigma\subset I$, the family $\{\phi_i\}_{i\in\sigma}\cup\{\psi_i\}_{i\in\sigma^c}$ is a frame for $\hs$ with lower and upper frame bounds $A$ and $B$, respectively. The family $\{\phi_i\}_{i\in\sigma}\cup\{\psi_i\}_{i\in\sigma^c}$ is called a weaving. 
\end{definition}
If $A=B$, we call that $\{\phi_i\}_{i\in I}$ and $\{\psi_i\}_{i\in I}$ are $A$-woven, and if $A=B=1$, then we call them $1$-woven.  

Suppose that $\{\phi_i\}_{i\in I}$ and $\{\psi_i\}_{i\in I}$  are woven, the frame operator of $\{\phi_i\}_{i\in\sigma}\cup\{\psi_i\}_{i\in\sigma^c}$ is defined by
$$S_{\ws}f=\sum_{i\in\sigma}\left\langle f,\phi_i\right\rangle \phi_i+\sum_{i\in\sigma^c}\left\langle f,\psi_i\right\rangle \psi_i,$$
then $S_{\ws}$ is a bounded, invertible, self-adjoint and positive operator. A frame  $\{\varphi_i\}_{i\in I}$ is called an alternate dual frame of $\{\phi_i\}_{i\in\sigma}\cup\{\psi_i\}_{i\in\sigma^c}$ if for all $f\in\hs$ the following identity holds:
\begin{equation}\label{1.1}
f=\sum_{i\in \sigma}\left\langle f,\phi_i\right\rangle \varphi_i+\sum_{i\in \sigma^c}\left\langle f,\psi_i\right\rangle \varphi_i.
\end{equation}
For every $\sigma\subset I$, define the bounded linear operators $S_{\ws}^{\sigma},~S_{\ws}^{\sigma^c}:\hs\rightarrow\hs$ by 
$$S_{\ws}^{\sigma}f=\sum_{i\in\sigma}\left\langle f,\phi_i\right\rangle \phi_i,~~S_{\ws}^{\sigma^c}f=\sum_{i\in\sigma^c}\left\langle f,\psi_i\right\rangle \psi_i.$$
It is easy to check that 
 $S_{\ws}^{\sigma}$ and $S_{\ws}^{\sigma^c}$ are  self-adjoint.
 
 In \cite{balan2006signal}, the authors solved a long-standing conjecture of the signal processing community.  They showed that for suitable frames $\{\phi_i\}_{i\in I}$, a signal $f$ can (up to a global phase) be recovered from the phase-less measurements $\{|\langle f,\phi_i\rangle|\}_{i\in I}$. Note, that this only shows that reconstruction of $f$ is in principle possible, but there is not an effective constructive algorithm. While searching for such an algorithm, the authors of \cite{balan2005decompositions} discovered a new identity for Parseval frames \cite{balan2007new}. The authors in \cite{guavructa2006some,zhu2010note} generalized these identities to alternate dual frames and got some general results.
 The study of inequalities has interested many mathematicians.  Some authors have extended the equalities and inequalities for frames in Hilbert spaces to generalized  frames \cite{li2018some,li2012some,xiao2015erasures,poria2017some}. The following form was given in \cite{balan2007new} (See \cite{balan2005decompositions} for a discussion of the origins of this fundamental identity).
 
\begin{theorem}\label{thm1}
Let $\{\phi_i\}_{i\in I}$ be a Parseval frame for $\hs$. For every $J\subset I$ and every $f\in\hs$, we have
\begin{equation}\label{1.2}
\sum_{i\in J}\left| \left\langle f,\phi_i\right\rangle \right| ^2+\left\| \sum_{i\in J^c}\left\langle f,\phi_i\right\rangle \phi_i\right\| ^2=\sum_{i\in J^c}\left| \left\langle f,\phi_i\right\rangle \right| ^2+\left\| \sum_{i\in J}\left\langle f,\phi_i\right\rangle \phi_i\right\| ^2\ge \frac{3}{4}\|f\|^2.
\end{equation}
\end{theorem}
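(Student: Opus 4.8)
The plan is to reduce both sides of \eqref{1.2} to expressions in the single operator $S_J$, using that a Parseval frame has frame operator equal to the identity, so that $S_J+S_{J^c}=\mathrm{Id}$ on $\hs$.

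First I would record two elementary facts. Since $S_J$ is self-adjoint and positive, $\sum_{i\in J}\abs{\langle f,\phi_i\rangle}^2=\langle S_Jf,f\rangle\in\rs$ and $\|S_Jf\|^2=\langle S_J^2f,f\rangle$, and the same holds with $J$ replaced by $J^c$. Moreover, because $\{\phi_i\}_{i\in I}$ is Parseval, $\sum_{i\in I}\langle f,\phi_i\rangle\phi_i=f$, and hence $S_{J^c}f=f-S_Jf$ (all series converging unconditionally, so there is no issue with $J$ being infinite).

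Next I would rewrite the left-hand side of \eqref{1.2} as
$$\langle S_Jf,f\rangle+\|f-S_Jf\|^2=\|f\|^2-\langle S_Jf,f\rangle+\|S_Jf\|^2=\|f\|^2+\big\langle(S_J^2-S_J)f,f\big\rangle,$$
and then complete the square via $S_J^2-S_J=\big(S_J-\tfrac12\mathrm{Id}\big)^2-\tfrac14\mathrm{Id}$, which gives
$$\sum_{i\in J}\abs{\langle f,\phi_i\rangle}^2+\Big\|\sum_{i\in J^c}\langle f,\phi_i\rangle\phi_i\Big\|^2=\tfrac34\|f\|^2+\Big\|\big(S_J-\tfrac12\mathrm{Id}\big)f\Big\|^2.$$
Since $S_{J^c}-\tfrac12\mathrm{Id}=-\big(S_J-\tfrac12\mathrm{Id}\big)$, the right-hand side of this last identity is invariant under the exchange $J\leftrightarrow J^c$; this simultaneously yields the claimed equality in \eqref{1.2} and, because the squared norm is nonnegative, the lower bound $\tfrac34\|f\|^2$.

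I do not expect a genuine obstacle: the argument is a few lines of bookkeeping once one has $S_J+S_{J^c}=\mathrm{Id}$, and the only step that takes a moment's thought is spotting the completion of the square $S_J^2-S_J=\big(S_J-\tfrac12\mathrm{Id}\big)^2-\tfrac14\mathrm{Id}$, which is exactly what produces the constant $\tfrac34$ and makes the sharpness of the bound transparent. Equivalently, one could average the two (equal) sides to obtain $\tfrac12\big(\|f\|^2+\langle(S_J^2+S_{J^c}^2)f,f\rangle\big)$ and then invoke the operator inequality $S_J^2+S_{J^c}^2\ge\tfrac12\mathrm{Id}$; this is the route of the original argument of Balan, Casazza, Edidin and Gowers, and it leads to the same conclusion.
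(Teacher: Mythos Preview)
Your proof is correct. The paper does not give a standalone proof of this statement (it is quoted from Balan et al.), but it proves the weaving-frame generalization Theorem~\ref{thm5.1}, whose specialization $\phi_i=\psi_i$ recovers Theorem~\ref{thm1}. That argument proceeds in two separate stages: the equality comes from Lemma~\ref{lem1} (the identity $P+Q^*Q=Q^*+P^*P$ when $P+Q=I_{\hs}$), and the lower bound comes from the operator inequality $S_J^2+S_{J^c}^2\ge\tfrac12 I_{\hs}$, itself proved by completing the square. Your route is a mild streamlining of the same mechanism: the single completion $S_J^2-S_J=(S_J-\tfrac12 I_{\hs})^2-\tfrac14 I_{\hs}$ rewrites the left side of \eqref{1.2} directly as $\tfrac34\|f\|^2+\|(S_J-\tfrac12 I_{\hs})f\|^2$, from which both the equality (via the symmetry $S_{J^c}-\tfrac12 I_{\hs}=-(S_J-\tfrac12 I_{\hs})$) and the inequality follow at once, without invoking Lemma~\ref{lem1} separately. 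Your closing ``averaging'' remark is precisely the paper's route for the inequality, so the two arguments differ only in packaging.
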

 
 Later on, the author in \cite{guavructa2006some} generalized Theorem \ref{thm1} to general frames.
 
\begin{theorem}\label{thm2}
	Let $\{\phi_i\}_{i\in I}$ be a frame for $\hs$ with canonical dual frame $\{\widetilde{\phi}_i\}_{i\in I}$. Then for every $J\subset I$ and every $f\in\hs$, we have
	\begin{equation}\label{1.3}
	\sum_{i\in J}\left| \left\langle f,\phi_i\right\rangle \right| ^2+\sum_{i\in I}\left| \left\langle S_{J^c}f,\widetilde{\phi}_i\right\rangle \right| ^2=\sum_{i\in J^c}\left| \left\langle f,\phi_i\right\rangle \right| ^2+\sum_{i\in I}\left| \left\langle S_{J}f,\widetilde{\phi}_i\right\rangle \right| ^2\ge \frac{3}{4}\sum_{i\in I}\left| \left\langle f,\phi_i\right\rangle \right| ^2.
	\end{equation}
\end{theorem}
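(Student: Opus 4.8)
The plan is to translate both sides of \eqref{1.3} into quadratic forms of a single positive operator and then reduce the whole statement to the elementary inequality ``the square of a self-adjoint operator is positive''. Throughout, let $S$ denote the frame operator of $\{\phi_i\}_{i\in I}$, so that $S=S_J+S_{J^c}$ and each of $S_J,\,S_{J^c}$ is a bounded, positive, self-adjoint operator on $\hs$.

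First I would carry out two routine reductions. Since $\widetilde\phi_i=S^{-1}\phi_i$ and $S^{-1}$ is self-adjoint, for every $g\in\hs$ we have
\[
\sum_{i\in I}\bigl|\langle g,\widetilde\phi_i\rangle\bigr|^2=\sum_{i\in I}\bigl|\langle S^{-1}g,\phi_i\rangle\bigr|^2=\bigl\langle S(S^{-1}g),S^{-1}g\bigr\rangle=\langle S^{-1}g,g\rangle .
\]
Applying this with $g=S_{J^c}f$ and with $g=S_Jf$, and using $\sum_{i\in J}|\langle f,\phi_i\rangle|^2=\langle S_Jf,f\rangle$, the asserted identity \eqref{1.3} turns into
\[
\langle S_Jf,f\rangle+\langle S^{-1}S_{J^c}f,\,S_{J^c}f\rangle=\langle S_{J^c}f,f\rangle+\langle S^{-1}S_Jf,\,S_Jf\rangle ,
\]
and the lower bound becomes $\langle S_Jf,f\rangle+\langle S^{-1}S_{J^c}f,S_{J^c}f\rangle\ge\frac34\langle Sf,f\rangle$, since $\sum_{i\in I}|\langle f,\phi_i\rangle|^2=\langle Sf,f\rangle$.

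The key step is a change of variables that symmetrizes the problem. Set $P=S^{-1/2}S_JS^{-1/2}$ and $Q=S^{-1/2}S_{J^c}S^{-1/2}$; these are positive self-adjoint operators with $P+Q=S^{-1/2}SS^{-1/2}=I$, so in particular $0\le P,Q\le I$ and $P,Q$ commute. Writing $g=S^{1/2}f$, one checks directly that $\langle S_Jf,f\rangle=\langle Pg,g\rangle$, that $\langle S^{-1}S_{J^c}f,S_{J^c}f\rangle=\|S^{-1/2}S_{J^c}f\|^2=\|Qg\|^2=\langle Q^2g,g\rangle$, and symmetrically for the remaining two terms, while $\langle Sf,f\rangle=\|g\|^2$. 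Hence \eqref{1.3} is equivalent to the operator inequality
\[
\langle(P+Q^2)g,g\rangle=\langle(P^2+Q)g,g\rangle\ge\tfrac34\|g\|^2\qquad\text{for all }g\in\hs .
\]

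Finally I would finish with two lines of operator algebra. For the equality, commutativity of $P,Q$ gives $(P+Q^2)-(P^2+Q)=(P-Q)-(P^2-Q^2)=(P-Q)\bigl(I-(P+Q)\bigr)=0$. For the inequality, substituting $P=I-Q$ yields $P+Q^2=I-Q(I-Q)=\tfrac34 I+\bigl(Q-\tfrac12 I\bigr)^2\ge\tfrac34 I$, because $(Q-\tfrac12 I)^2\ge0$ for the self-adjoint operator $Q$. I expect the only point that needs a little care to be the first paragraph — making the series manipulations rigorous and applying $\sum_{i\in I}|\langle g,\widetilde\phi_i\rangle|^2=\langle S^{-1}g,g\rangle$ to the correct vectors; once the conjugation by $S^{-1/2}$ is in place the rest is immediate, and this form also makes clear that the constant $\tfrac34$ is optimal (equality precisely when $Q=\tfrac12 I$).
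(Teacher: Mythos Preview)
Your proof is correct and follows essentially the same route the paper takes: the paper does not prove Theorem~\ref{thm2} directly but obtains it as the special case $\phi_i=\psi_i$, $\lambda=1$ of Theorem~\ref{thm7.1}, whose proof uses exactly your conjugation $P=S^{-1/2}S_JS^{-1/2}$, $Q=S^{-1/2}S_{J^c}S^{-1/2}$, $g=S^{1/2}f$ together with the completing-the-square identity $P+Q^2=\tfrac34 I+(Q-\tfrac12 I)^2$ (isolated there as Lemma~\ref{lem2}). Your additional remark on the optimality of $\tfrac34$ is a nice bonus not present in the paper.
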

\begin{theorem}\label{thm2.1}
	Let $\{\phi_i\}_{i\in I}$ be a frame for $\hs$ and $\{\varphi_i\}_{i\in I}$ be an alternate dual frame of $\{\phi_i\}_{i\in I}$. Then for every $J\subset I$ and every $f\in\hs$, we have
	\begin{equation}\label{1.4}
{\rm Re}\left( \sum_{i\in J}\left\langle f,\varphi_i\right\rangle \overline{\left\langle f,\phi_i\right\rangle }\right) +\left\| \sum_{i\in J^c}\left\langle f,\varphi_i\right\rangle \phi_i\right\| ^2={\rm Re}\left( \sum_{i\in J^c}\left\langle f,\varphi_i\right\rangle \overline{\left\langle f,\phi_i\right\rangle }\right) +\left\| \sum_{i\in J}\left\langle f,\varphi_i\right\rangle \phi_i\right\| ^2\ge \frac{3}{4}\|f\|^2.
	\end{equation}
\end{theorem}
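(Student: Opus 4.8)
The plan is to mimic the classical argument behind Theorem~\ref{thm1} and Theorem~\ref{thm2}, but now working with the mixed sesquilinear form attached to the dual pair $(\{\phi_i\},\{\varphi_i\})$ rather than with a single Parseval frame. First I would introduce the partial operators $T_J f=\sum_{i\in J}\langle f,\varphi_i\rangle\phi_i$ and $T_{J^c}f=\sum_{i\in J^c}\langle f,\varphi_i\rangle\phi_i$, so that $T_J+T_{J^c}=I$ on $\hs$ by the alternate-dual reconstruction formula~\eqref{1.1} (here specialized to $\sigma=I$, i.e.\ to the ordinary dual of $\{\phi_i\}$). The key observation is that ${\rm Re}\big(\sum_{i\in J}\langle f,\varphi_i\rangle\overline{\langle f,\phi_i\rangle}\big)={\rm Re}\langle T_J f,f\rangle$, and similarly for $J^c$; thus both sides of~\eqref{1.4} can be rewritten purely in terms of $T_J,T_{J^c}$ and the identity $T_J+T_{J^c}=I$.

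For the \emph{equality} part I would compute the difference of the two sides. Writing $P={\rm Re}\langle T_J f,f\rangle+\|T_{J^c}f\|^2$ and $Q={\rm Re}\langle T_{J^c}f,f\rangle+\|T_J f\|^2$, and using $T_{J^c}=I-T_J$, one gets
\[
\|T_{J^c}f\|^2-\|T_J f\|^2=\langle (I-T_J)f,(I-T_J)f\rangle-\langle T_Jf,T_Jf\rangle=\|f\|^2-2{\rm Re}\langle T_J f,f\rangle,
\]
and likewise ${\rm Re}\langle T_J f,f\rangle-{\rm Re}\langle T_{J^c}f,f\rangle=2{\rm Re}\langle T_J f,f\rangle-\|f\|^2$; adding these shows $P-Q=0$. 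This is the same ``telescoping'' identity that powers Theorem~\ref{thm1}; the only subtlety is keeping track of real parts, since $T_J$ need not be self-adjoint when $\{\varphi_i\}$ is a non-canonical dual.

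For the \emph{inequality} part I would again use $T_{J^c}=I-T_J$ to express the common value as
\[
P={\rm Re}\langle T_J f,f\rangle+\|(I-T_J)f\|^2
={\rm Re}\langle T_J f,f\rangle+\|f\|^2-2{\rm Re}\langle T_J f,f\rangle+\|T_J f\|^2
=\|f\|^2-{\rm Re}\langle T_J f,f\rangle+\|T_J f\|^2.
\]
Setting $u=T_J f$, it then suffices to prove the pointwise scalar bound $\|u\|^2-{\rm Re}\langle u,f\rangle+\|f\|^2\ge\tfrac34\|f\|^2$, i.e.\ $\|u\|^2-{\rm Re}\langle u,f\rangle+\tfrac14\|f\|^2\ge0$, which is immediate from $\|u-\tfrac12 f\|^2\ge0$. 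The main obstacle, such as it is, lies in the first paragraph: justifying that $T_J$ is a well-defined bounded operator and that the series manipulations (splitting $\sum_{i\in I}$ into $\sum_{i\in J}+\sum_{i\in J^c}$, and interchanging sum and inner product) are legitimate — this uses that $\{\phi_i\}$ and $\{\varphi_i\}$ are frames, so both analysis operators are bounded and the relevant series converge unconditionally. Once that bookkeeping is in place, the algebraic identity and the completion-of-the-square estimate finish the proof.
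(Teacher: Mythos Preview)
Your proof is correct and follows essentially the same route as the paper. The paper does not prove Theorem~\ref{thm2.1} directly but recovers it as the special case $\phi_i=\psi_i$, $\lambda=\tfrac12$ of Theorem~\ref{thm3}, whose proof introduces the same partial operators $E_\sigma=T_J$, $E_{\sigma^c}=T_{J^c}$ with $E_\sigma+E_{\sigma^c}=I_{\hs}$ and then appeals to the operator identity of Lemma~\ref{lem3}; your completion of the square $\|u-\tfrac12 f\|^2\ge0$ is precisely the inequality $(P-\tfrac12 I_{\hs})^*(P-\tfrac12 I_{\hs})\ge0$ that underlies that lemma, just carried out directly at the level of inner products without the free parameter~$\lambda$.
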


Motivated by these interesting results, the authors in \cite{zhu2010note} generalized the Theorem \ref{thm2.1} to a more general form which does not involve the real parts of the complex numbers.

\begin{theorem}\label{thm2.2}
	Let $\{\phi_i\}_{i\in I}$ be a  frame for $\hs$ and $\{\varphi_i\}_{i\in I}$ be an alternate dual frame of $\{\phi_i\}_{i\in I}$. Then for every $J\subset I$ and every $f\in\hs$, we have
	\begin{equation}\label{1.5}
	\left( \sum_{i\in J}\left\langle f,\varphi_i\right\rangle \overline{\left\langle f,\phi_i\right\rangle }\right) +\left\| \sum_{i\in J^c}\left\langle f,\varphi_i\right\rangle \phi_i\right\| ^2=\overline{\left( \sum_{i\in J^c}\left\langle f,\varphi_i\right\rangle \overline{\left\langle f,\phi_i\right\rangle }\right)} +\left\| \sum_{i\in J}\left\langle f,\varphi_i\right\rangle \phi_i\right\| ^2\ge \frac{3}{4}\|f\|^2.
	\end{equation}
\end{theorem}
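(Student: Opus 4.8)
My plan is to recast the two sides in terms of a single vector and then complete the square. First I would fix $J\subset I$ and $f\in\hs$ and set
\[
g:=\sum_{i\in J}\langle f,\varphi_i\rangle\phi_i .
\]
Since $\{\varphi_i\}_{i\in I}$ is an alternate dual of $\{\phi_i\}_{i\in I}$, the reconstruction formula $f=\sum_{i\in I}\langle f,\varphi_i\rangle\phi_i$ gives $\sum_{i\in J^c}\langle f,\varphi_i\rangle\phi_i=f-g$, and, using $\overline{\langle f,\phi_i\rangle}=\langle\phi_i,f\rangle$,
\[
\sum_{i\in J}\langle f,\varphi_i\rangle\,\overline{\langle f,\phi_i\rangle}=\Big\langle\sum_{i\in J}\langle f,\varphi_i\rangle\phi_i,\,f\Big\rangle=\langle g,f\rangle,\qquad \overline{\sum_{i\in J^c}\langle f,\varphi_i\rangle\,\overline{\langle f,\phi_i\rangle}}=\overline{\langle f-g,f\rangle}=\langle f,f-g\rangle .
\]
Thus the identity in \eqref{1.5} is equivalent to $\langle g,f\rangle+\|f-g\|^2=\langle f,f-g\rangle+\|g\|^2$, and the stated bound to a lower estimate on this common value.

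The equality part is then pure Hilbert-space algebra: I would expand $\|f-g\|^2=\|f\|^2-\langle g,f\rangle-\langle f,g\rangle+\|g\|^2$, whereupon both $\langle g,f\rangle+\|f-g\|^2$ and $\langle f,f-g\rangle+\|g\|^2$ collapse to $\|f\|^2-\langle f,g\rangle+\|g\|^2$. I would note in passing that when $\{\varphi_i\}$ is not the canonical dual the operator $f\mapsto g$ need not be self-adjoint, so this common value is genuinely complex in general (its two sides having equal imaginary parts); accordingly the bound $\ge\frac34\|f\|^2$ in \eqref{1.5} is read for the real part, and taking real parts throughout recovers Theorem~\ref{thm2.1}, the squared-norm summands already being real.

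For the inequality I would write $V:=\|f\|^2-\langle f,g\rangle+\|g\|^2$ for the common value and use ${\rm Re}\langle f,g\rangle={\rm Re}\langle g,f\rangle$ together with a single completion of the square:
\[
{\rm Re}\,V=\|f\|^2-{\rm Re}\langle g,f\rangle+\|g\|^2=\tfrac34\|f\|^2+\Big(\|g\|^2-{\rm Re}\langle g,f\rangle+\tfrac14\|f\|^2\Big)=\tfrac34\|f\|^2+\Big\|g-\tfrac12 f\Big\|^2\ \ge\ \tfrac34\|f\|^2 .
\]
This gives \eqref{1.5}, with equality precisely when $\sum_{i\in J}\langle f,\varphi_i\rangle\phi_i=\tfrac12 f$.

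I do not expect a serious obstacle here: once the two sides are reduced to $\langle g,f\rangle+\|f-g\|^2$ and $\langle f,f-g\rangle+\|g\|^2$, both the identity and the estimate are one line each. The only point that needs care — and the only thing that genuinely separates the general alternate-dual case from the Parseval case of Theorem~\ref{thm1} — is that $g\mapsto\langle g,f\rangle$ is not symmetric in $g$, so the square has to be completed against ${\rm Re}\langle g,f\rangle$ (equivalently, against the Hermitian part of the operator $f\mapsto g$); this is harmless because the term $\|f-g\|^2$ is real and absorbs the remaining cross term.
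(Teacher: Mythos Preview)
Your proof is correct and is essentially the paper's argument specialized to a fixed $f$: the paper does not prove this statement directly but recovers it from its operator-theoretic Theorems~\ref{thm4} and~\ref{thm3}, which rest on Lemma~\ref{lem1} ($P+Q^*Q=Q^*+P^*P$ when $P+Q=I_{\hs}$) for the identity and on Lemma~\ref{lem3} (the operator completion of the square $(P-\lambda I_{\hs})^*(P-\lambda I_{\hs})\ge 0$, taken at $\lambda=\tfrac12$) for the bound. Your vector $g$ is exactly $E_Jf$ in the paper's notation, and your inequality $\|g-\tfrac12 f\|^2\ge 0$ is precisely the $\lambda=\tfrac12$ instance of Lemma~\ref{lem3} evaluated at $f$, so the two arguments coincide line for line.
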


In this paper, we generalize the above mentioned results for weaving frames in Hilbert spaces. We generalize the above inequalities to a more general form which involve a scalar $\lambda\in\rs$ which is different from the scalar $\lambda\in[0,1]$ in \cite{poria2017some}. Since a frame is woven with itself, the previous equality and inequalities in frames can be obtained as a special case of the results we establish on weaving frames.
\section{Results and their proofs}
We first state a simple result on operators, which is  a distortion of \cite[Lemma 2.1]{zhu2010note}.
\begin{lemma}\label{lem1}
If $P,~Q\in L(\hs)$ satisfying $P+Q=I_{\hs}$, then $P+Q^*Q=Q^*+P^*P$.
\end{lemma}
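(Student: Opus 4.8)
The plan is to verify the identity by a direct algebraic manipulation, using $Q = I_{\hs} - P$ to eliminate $Q$ and reduce everything to a statement about $P$ alone. First I would substitute $Q = I_{\hs} - P$ into the right-hand side expression $Q^* + P^*P$: since $Q^* = (I_{\hs} - P)^* = I_{\hs} - P^*$, we get $Q^* + P^*P = I_{\hs} - P^* + P^*P$. On the left-hand side, $Q^*Q = (I_{\hs} - P^*)(I_{\hs} - P) = I_{\hs} - P - P^* + P^*P$, so $P + Q^*Q = P + I_{\hs} - P - P^* + P^*P = I_{\hs} - P^* + P^*P$. The two sides coincide, which proves the lemma.

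The key steps, in order, are: (1) rewrite $Q$ as $I_{\hs} - P$ and take adjoints to get $Q^* = I_{\hs} - P^*$; (2) expand $Q^*Q$ and simplify $P + Q^*Q$; (3) expand $Q^* + P^*P$; (4) observe that both expressions equal $I_{\hs} - P^* + P^*P$ and conclude. Each step is a one-line computation using only linearity of the adjoint and the hypothesis $P + Q = I_{\hs}$.

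There is essentially no obstacle here: the statement is a formal identity in the $*$-algebra $L(\hs)$, and the only thing to be careful about is the non-commutativity of operator multiplication, which is harmless since we never need to commute $P$ and $P^*$ past each other — the cross terms $-P$ and $-P^*$ appear symmetrically and cancel cleanly. (One could equally phrase the proof symmetrically: $P + Q^*Q - Q^* - P^*P = P - Q^* + Q^*Q - P^*P = P - Q^* + (Q^* - P^*)(P + Q) \cdot \text{nothing}$; but the straightforward substitution above is cleanest.) This lemma will serve as the operator-theoretic engine for the weaving-frame identities in the sequel, applied with $P$ and $Q$ taken to be the relevant partial frame operators composed with $S_{\ws}^{-1}$.
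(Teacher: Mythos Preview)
Your proof is correct and essentially identical to the paper's: both arguments substitute $Q=I_{\hs}-P$ (and $Q^*=I_{\hs}-P^*$) and reduce each side to $I_{\hs}-P^*+P^*P$. The only cosmetic difference is that the paper chains the equalities from the left-hand side to the right-hand side in one line, whereas you compute the two sides separately and compare.
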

\begin{proof}
	A simple computation shows that
	$$P+Q^*Q=I_{\hs}-Q+Q^*Q=I_{\hs}-(I_{\hs}-Q^*)Q=I_{\hs}-P^*(I_\hs-P)=I_{\hs}-P^*+P^*P=Q^*+P^*P.$$
\end{proof}
Now we state and prove a Parseval weaving frame identity.
\begin{theorem}\label{thm5.1}
Suppose $\{\phi_i\}_{i\in I}$ and $\{\psi_i\}_{i\in I}$  for a Hilbert space $\hs$ are 1-woven. Then for all  $\sigma\subset I$  and all $f\in\hs$, we have
\begin{equation}\label{2.1}
\sum_{i\in\sigma}\left| \left\langle f,\phi_i\right\rangle \right| ^2+\left\| \sum_{i\in\sigma^c}\left\langle f,\psi_i\right\rangle \psi_i\right\| ^2=\sum_{i\in\sigma^c}\left| \left\langle f,\psi_i\right\rangle \right| ^2+\left\| \sum_{i\in\sigma}\left\langle f,\phi_i\right\rangle \phi_i\right\| ^2\ge\frac{3}{4}\|f\|^2.
\end{equation}
\end{theorem}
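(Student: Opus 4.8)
The plan is to reduce the statement to the operator identity of Lemma \ref{lem1}. Since $\{\phi_i\}_{i\in I}$ and $\{\psi_i\}_{i\in I}$ are $1$-woven, for the fixed partition $\sigma$ the weaving $\{\phi_i\}_{i\in\sigma}\cup\{\psi_i\}_{i\in\sigma^c}$ is a Parseval frame, hence its frame operator is the identity: $S_{\ws}^{\sigma}+S_{\ws}^{\sigma^c}=S_{\ws}=I_{\hs}$. Set $P=S_{\ws}^{\sigma}$ and $Q=S_{\ws}^{\sigma^c}$; both are self-adjoint and $P+Q=I_{\hs}$, so Lemma \ref{lem1} applies.

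First I would apply Lemma \ref{lem1} to get $P+Q^*Q=Q^*+P^*P$, which by self-adjointness of $P,Q$ reads $P+Q^2=Q+P^2$. Pairing each side with $f$ and using that $P,Q$ are self-adjoint, one has $\langle Pf,f\rangle=\sum_{i\in\sigma}\abs{\langle f,\phi_i\rangle}^2$, $\langle Qf,f\rangle=\sum_{i\in\sigma^c}\abs{\langle f,\psi_i\rangle}^2$, $\langle Q^2f,f\rangle=\|Qf\|^2=\bigl\|\sum_{i\in\sigma^c}\langle f,\psi_i\rangle\psi_i\bigr\|^2$, and $\langle P^2f,f\rangle=\|Pf\|^2=\bigl\|\sum_{i\in\sigma}\langle f,\phi_i\rangle\phi_i\bigr\|^2$. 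Substituting these into $\langle(P+Q^2)f,f\rangle=\langle(Q+P^2)f,f\rangle$ yields exactly the claimed equality in \eqref{2.1}.

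For the lower bound I would substitute $Q=I_{\hs}-P$ into $P+Q^2$ to obtain
$$P+Q^2=I_{\hs}-P+P^2=\Bigl(P-\tfrac12 I_{\hs}\Bigr)^2+\tfrac34 I_{\hs}.$$
Since $P$ is self-adjoint, $\bigl(P-\tfrac12 I_{\hs}\bigr)^2\ge 0$, so $P+Q^2\ge\tfrac34 I_{\hs}$; evaluating at $f$ gives $\langle(P+Q^2)f,f\rangle\ge\tfrac34\|f\|^2$, which is the right-hand inequality.

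The argument is essentially mechanical once the right viewpoint is taken; the only genuine point to notice is that being $1$-woven forces the weaving frame operator to be the identity, so that $P+Q=I_{\hs}$ and Lemma \ref{lem1} can be invoked directly, together with the bookkeeping that the cross terms in $\|S_{\ws}^{\sigma}f\|^2$ and $\|S_{\ws}^{\sigma^c}f\|^2$ are absorbed cleanly via self-adjointness ($\|Pf\|^2=\langle P^2f,f\rangle$) rather than expanded by hand.
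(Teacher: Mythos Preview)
Your proof is correct and follows essentially the same approach as the paper: both set $P=S_{\ws}^{\sigma}$, $Q=S_{\ws}^{\sigma^c}$ with $P+Q=I_{\hs}$, invoke Lemma~\ref{lem1} for the equality, and complete the square for the lower bound. Your derivation of the inequality is in fact slightly more direct---you compute $P+Q^2=(P-\tfrac12 I_{\hs})^2+\tfrac34 I_{\hs}$ in one step, whereas the paper first shows $P^2+Q^2\ge\tfrac12 I_{\hs}$, adds $P+Q=I_{\hs}$, and then halves using the already-established equality $P+Q^2=Q+P^2$.
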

\begin{proof}
Since $\{\phi_i\}_{i\in I}$ and $\{\psi_i\}_{i\in I}$ are  $1$-woven,  the weaving frame $\{\phi_i\}_{i\in\sigma}\cup\{\psi_i\}_{i\in\sigma^c}$ is a Parseval frame for $\hs$. Then the frame operator of $\{\phi_i\}_{i\in\sigma}\cup\{\psi_i\}_{i\in\sigma^c}$  is $S_{\ws}=I_{\hs}$. For every $\sigma\subset I$, we have $S_{\ws}^{\sigma}+S_{\ws}^{\sigma^c}=I_{\hs}$. Note that $S_{\ws}^{\sigma^c}$ is a self-adjoint operator, and therefore $(S_{\ws}^{\sigma^c})^*=S_{\ws}^{\sigma^c}$. By applying Lemma \ref{lem1} to the operator $S_{\ws}^{\sigma}$ and $S_{\ws}^{\sigma^c}$, for all $f\in\hs$, we obtain
$$\left\langle S_{\ws}^{\sigma}f,f\right\rangle +\left\langle (S_{\ws}^{\sigma^c})^*S_{\ws}^{\sigma^c}f,f\right\rangle=
\left\langle (S_{\ws}^{\sigma^c})^*f,f\right\rangle +\left\langle (S_{\ws}^{\sigma})^*S_{\ws}^{\sigma}f,f\right\rangle.  $$
Thus
$$\left\langle S_{\ws}^{\sigma}f,f\right\rangle +\left\| S_{\ws}^{\sigma^c}f\right\|=
\left\langle S_{\ws}^{\sigma^c}f,f\right\rangle +\left\|  S_{\ws}^{\sigma}f\right\| . $$
Hence
$$\sum_{i\in\sigma}\left| \left\langle f,\phi_i\right\rangle \right| ^2+\left\| \sum_{i\in\sigma^c}\left\langle f,\psi_i\right\rangle \psi_i\right\| ^2=\sum_{i\in\sigma^c}\left| \left\langle f,\psi_i\right\rangle \right| ^2+\left\| \sum_{i\in\sigma}\left\langle f,\phi_i\right\rangle \phi_i\right\| ^2.$$
Next, we prove the inequality of \eqref{2.1}. A simple computation shows that 
$$(S_{\ws}^{\sigma})^2+(S_{\ws}^{\sigma^c})^2=(S_{\ws}^{\sigma})^2+(I_{\hs}-(S_{\ws}^{\sigma}))^2=2(S_{\ws}^{\sigma})^2-2S_{\ws}^{\sigma}+I_{\hs}=2\left( S_{\ws}^{\sigma}-\frac{1}{2}I_{\hs}\right) ^2+\frac{1}{2}I_{\hs},$$
and so 
$$(S_{\ws}^{\sigma})^2+(S_{\ws}^{\sigma^c})^2\ge \frac{1}{2}I_{\hs}.$$
Since $S_{\ws}^{\sigma}+S_{\ws}^{\sigma^c}=I_{\hs}$. it follows that 
\begin{equation}\label{2.2}
S_{\ws}^{\sigma}+(S_{\ws}^{\sigma^c})^2+S_{\ws}^{\sigma^c}+(S_{\ws}^{\sigma})^2\ge \frac{3}{2}I_{\hs}.
\end{equation}
Notice that operator $S_{\ws}^{\sigma}$ is also self-adjoint and therefore $(S_{\ws}^{\sigma})^*=S_{\ws}^{\sigma}$. Applying Lemma \ref{lem1} to the operators $P=S_{\ws}^{\sigma}$ and $Q=S_{\ws}^{\sigma^c}$, we obtain
$$S_{\ws}^{\sigma}+(S_{\ws}^{\sigma^c})^2=S_{\ws}^{\sigma^c}+(S_{\ws}^{\sigma})^2.$$
Then equation \eqref{2.2} means that
$$(S_{\ws}^{\sigma^c}+(S_{\ws}^{\sigma})^2)\ge\frac{3}{4}I_{\hs}.$$
Therefore for all $f\in\hs$, we have
$$\sum_{i\in\sigma^c}\left| \left\langle f,\psi_i\right\rangle \right| ^2+\left\| \sum_{i\in\sigma}\left\langle f,\phi_i\right\rangle \phi_i\right\| ^2=\left\langle S_{\ws}^{\sigma^c}f,f\right\rangle +\left\langle S_{\ws}^{\sigma}f,S_{\ws}^{\sigma}f\right\rangle=\left\langle (S_{\ws}^{\sigma^c}+(S_{\ws}^{\sigma})^2)f,f\right\rangle \ge\frac{3}{4}\|f\|^2.$$
This completes the proof. 
\end{proof}
\begin{remark}
If we take $\phi_i=\psi_i$ for all $i\in I$ in Theorem \ref{thm5.1}, we can obtain the Theorem \ref{thm1}.
\end{remark}

\begin{lemma}\label{lem2}
Let $P,~Q\in L(\hs)$ be two self-adjoint operators such that $P+Q=I_{\hs}$. Then for any $\lambda\in\rs$, and all $f\in\hs$, we have
	\begin{align*}
	\|Pf\|^2+\lambda\left\langle Qf,f\right\rangle &=\|Qf\|^2+(2-\lambda)\left\langle Pf,f\right\rangle +(\lambda-1)\|f\|^2\\
&\ge (\lambda-\frac{\lambda^2}{4})\|f\|^2.
\end{align*}
\end{lemma}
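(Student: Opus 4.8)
The plan is to translate the whole assertion into a statement about the single self-adjoint operator $P$ by substituting $Q=I_{\hs}-P$, and then to close the inequality by completing a square. Both pieces are essentially mechanical once this substitution is made.

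\emph{The equality.} Since $P$ and $Q$ are self-adjoint, $\|Pf\|^2=\langle P^2f,f\rangle$ and $\|Qf\|^2=\langle Q^2f,f\rangle$, while $\langle Pf,f\rangle$ and $\langle Qf,f\rangle$ are real with $\langle Qf,f\rangle=\|f\|^2-\langle Pf,f\rangle$. From $Q=I_{\hs}-P$ one computes $P^2-Q^2=2P-I_{\hs}$ (equivalently, Lemma~\ref{lem1} applied to the self-adjoint pair $P,Q$ gives $P+Q^2=Q+P^2$). Rewriting the claimed identity as
$$\|Pf\|^2-\|Qf\|^2=(2-\lambda)\langle Pf,f\rangle-\lambda\langle Qf,f\rangle+(\lambda-1)\|f\|^2,$$
its left-hand side equals $\langle(2P-I_{\hs})f,f\rangle=2\langle Pf,f\rangle-\|f\|^2$, and substituting $\langle Qf,f\rangle=\|f\|^2-\langle Pf,f\rangle$ shows the right-hand side also equals $2\langle Pf,f\rangle-\|f\|^2$. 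Hence the two sides coincide for every $\lambda\in\rs$; this step is a routine computation.

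\emph{The inequality.} I would recognise the middle member as a quadratic form,
$$\|Qf\|^2+(2-\lambda)\langle Pf,f\rangle+(\lambda-1)\|f\|^2=\big\langle (Q^2+(2-\lambda)P+(\lambda-1)I_{\hs})f,\, f\big\rangle,$$
and then, using $Q=I_{\hs}-P$ once more, carry out the completion of the square
$$Q^2+(2-\lambda)P+(\lambda-1)I_{\hs}=P^2-\lambda P+\lambda I_{\hs}=\Big(P-\tfrac{\lambda}{2}I_{\hs}\Big)^2+\Big(\lambda-\tfrac{\lambda^2}{4}\Big)I_{\hs}.$$
Because $P-\tfrac{\lambda}{2}I_{\hs}$ is self-adjoint, $\big(P-\tfrac{\lambda}{2}I_{\hs}\big)^2\ge0$, so the operator above is $\ge\big(\lambda-\tfrac{\lambda^2}{4}\big)I_{\hs}$; evaluating against $f$ yields the stated lower bound.

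The only step that is not pure bookkeeping is spotting the identity $P^2-\lambda P+\lambda I_{\hs}=\big(P-\tfrac{\lambda}{2}I_{\hs}\big)^2+\big(\lambda-\tfrac{\lambda^2}{4}\big)I_{\hs}$, so I do not anticipate a genuine obstacle. As a sanity check, $\lambda=1$ gives $\lambda-\tfrac{\lambda^2}{4}=\tfrac34$ and recovers the symmetric identity together with the $\tfrac34\|f\|^2$ estimate already used in the proof of Theorem~\ref{thm5.1} (and, via \eqref{2.2}, the inequality there).
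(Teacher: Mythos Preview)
Your proof is correct and follows essentially the same route as the paper: substitute $Q=I_{\hs}-P$, reduce everything to the quadratic $P^2-\lambda P+\lambda I_{\hs}$, and complete the square $(P-\tfrac{\lambda}{2}I_{\hs})^2+(\lambda-\tfrac{\lambda^2}{4})I_{\hs}$. The only cosmetic difference is that for the equality the paper expands $\|Pf\|^2+\lambda\langle Qf,f\rangle$ directly into $\langle(P^2-\lambda P+\lambda I_{\hs})f,f\rangle$ and then rewrites this in terms of $Q$, whereas you move terms across and invoke $P^2-Q^2=2P-I_{\hs}$; these are the same computation.
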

\begin{proof}
	For all $f\in\hs$, we have
		\begin{align}\label{2.3}
	\|Pf\|^2+\lambda\left\langle Qf,f\right\rangle =&\left\langle P^2f,f\right\rangle +\lambda\left\langle (I_{\hs}-P)f,f\right\rangle\nonumber\\ 
	=&\left\langle (P^2-\lambda P+\lambda I_{\hs})f,f\right\rangle  \\
	=&\left\langle (I_{\hs}-P)^2f,f\right\rangle+(2-\lambda)\left\langle Pf,f\right\rangle +(\lambda-1)\left\langle f,f\right\rangle  \nonumber\\
	=&\|Qf\|^2+(2-\lambda)\left\langle Pf,f\right\rangle +(\lambda-1)\|f\|^2.\nonumber
	\end{align}
	 A simple computation of   \eqref{2.3}, we have
		\begin{align*}
\left\langle (P^2-\lambda P+\lambda I_{\hs})f,f\right\rangle
=&\left\langle ((P-\frac{\lambda}{2} I_{\hs})^2-\frac{\lambda^2}{4}I_{\hs}+\lambda I_{\hs})f,f\right\rangle  \\
=&\left\langle ((P-\lambda I_{\hs})^2+(\lambda-\frac{\lambda^2}{4})I_{\hs})f,f\right\rangle\\
\ge& (\lambda-\frac{\lambda^2}{4})\|f\|^2.
\end{align*}	
 This proves the desired result.
\end{proof}
\begin{theorem}\label{thm7.1}
Suppose two frames $\{\phi_i\}_{i\in I}$ and $\{\psi_i\}_{i\in I}$  for a Hilbert space $\hs$ are woven. Then for any $\lambda\in\rs$, for all $\sigma\subset I$ and all $f\in\hs$, we have
	\begin{align*}
&\sum_{i\in\sigma}\left| \left\langle f,\phi_i\right\rangle \right| ^2+\sum_{i\in\sigma}\left| \left\langle S^{\sigma^c}_{\ws}f,S_{\ws}^{-1}\phi_i\right\rangle \right| ^2+\sum_{i\in\sigma^c}\left| \left\langle S^{\sigma^c}_{\ws}f,S_{\ws}^{-1}\psi_i\right\rangle \right| ^2\\
=&\sum_{i\in\sigma^c}\left| \left\langle f,\psi_i\right\rangle \right| ^2+\sum_{i\in\sigma}\left| \left\langle S^{\sigma}_{\ws}f,S_{\ws}^{-1}\phi_i\right\rangle \right| ^2+\sum_{i\in\sigma^c}\left| \left\langle S^{\sigma}_{\ws}f,S_{\ws}^{-1}\psi_i\right\rangle \right| ^2\\
\ge &(\lambda-\frac{\lambda^2}{4})\sum_{i\in\sigma}\left| \left\langle f,\phi_i\right\rangle \right| ^2+(1-\frac{\lambda^2}{4})\sum_{i\in\sigma^c}\left| \left\langle f,\psi_i\right\rangle \right| ^2.
\end{align*}
\end{theorem}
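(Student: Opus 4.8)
The plan is to convert every weighted sum in the statement into a quadratic form of the weaving frame operator $S_{\ws}$ and of its two pieces $S_{\ws}^{\sigma},S_{\ws}^{\sigma^c}$, thereby reducing the displayed equality to a single operator identity and the displayed inequality to the positivity of a single quadratic form.

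First I would record the elementary rewritings. Since $S_{\ws}^{\sigma},S_{\ws}^{\sigma^c},S_{\ws},S_{\ws}^{-1}$ are all self-adjoint, we have $\sum_{i\in\sigma}|\langle f,\phi_i\rangle|^2=\langle S_{\ws}^{\sigma}f,f\rangle$, $\sum_{i\in\sigma^c}|\langle f,\psi_i\rangle|^2=\langle S_{\ws}^{\sigma^c}f,f\rangle$, and, sliding $S_{\ws}^{-1}$ across the inner products, $\langle S_{\ws}^{\sigma^c}f,S_{\ws}^{-1}\phi_i\rangle=\langle S_{\ws}^{-1}S_{\ws}^{\sigma^c}f,\phi_i\rangle$ (and similarly with $\psi_i$). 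Applying the defining relation $\sum_{i\in\sigma}|\langle h,\phi_i\rangle|^2+\sum_{i\in\sigma^c}|\langle h,\psi_i\rangle|^2=\langle S_{\ws}h,h\rangle$ to the auxiliary vector $h=S_{\ws}^{-1}S_{\ws}^{\sigma^c}f$ gives
\[
\sum_{i\in\sigma}\bigl|\langle S_{\ws}^{\sigma^c}f,S_{\ws}^{-1}\phi_i\rangle\bigr|^2+\sum_{i\in\sigma^c}\bigl|\langle S_{\ws}^{\sigma^c}f,S_{\ws}^{-1}\psi_i\rangle\bigr|^2=\langle S_{\ws}^{\sigma^c}S_{\ws}^{-1}S_{\ws}^{\sigma^c}f,f\rangle,
\]
and similarly with $\sigma$ in place of $\sigma^c$. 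Hence the displayed equality is equivalent to the operator identity $S_{\ws}^{\sigma}+S_{\ws}^{\sigma^c}S_{\ws}^{-1}S_{\ws}^{\sigma^c}=S_{\ws}^{\sigma^c}+S_{\ws}^{\sigma}S_{\ws}^{-1}S_{\ws}^{\sigma}$, which I would prove by substituting $S_{\ws}^{\sigma^c}=S_{\ws}-S_{\ws}^{\sigma}$ and expanding; equivalently it is Lemma \ref{lem1} applied to the self-adjoint operators $P=S_{\ws}^{-1/2}S_{\ws}^{\sigma}S_{\ws}^{-1/2}$ and $Q=S_{\ws}^{-1/2}S_{\ws}^{\sigma^c}S_{\ws}^{-1/2}$ (for which $P+Q=I_{\hs}$), conjugated back by $S_{\ws}^{1/2}$.

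For the inequality it then suffices, using that the common value equals $\langle(S_{\ws}^{\sigma^c}+S_{\ws}^{\sigma}S_{\ws}^{-1}S_{\ws}^{\sigma})f,f\rangle$, to check that
\[
\langle(S_{\ws}^{\sigma^c}+S_{\ws}^{\sigma}S_{\ws}^{-1}S_{\ws}^{\sigma})f,f\rangle-\Bigl(\lambda-\tfrac{\lambda^2}{4}\Bigr)\langle S_{\ws}^{\sigma}f,f\rangle-\Bigl(1-\tfrac{\lambda^2}{4}\Bigr)\langle S_{\ws}^{\sigma^c}f,f\rangle\ge 0.
\]
Replacing $S_{\ws}^{\sigma^c}=S_{\ws}-S_{\ws}^{\sigma}$ in the last term and collecting coefficients, the left side collapses to $\langle S_{\ws}^{\sigma}S_{\ws}^{-1}S_{\ws}^{\sigma}f,f\rangle-\lambda\langle S_{\ws}^{\sigma}f,f\rangle+\tfrac{\lambda^2}{4}\langle S_{\ws}f,f\rangle$. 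Since $\lambda\in\rs$ is real and $S_{\ws}^{-1}$ is self-adjoint and positive, one recognizes this (again by moving factors across inner products, as above) as $\bigl\langle S_{\ws}^{-1}\bigl(S_{\ws}^{\sigma}-\tfrac{\lambda}{2}S_{\ws}\bigr)f,\bigl(S_{\ws}^{\sigma}-\tfrac{\lambda}{2}S_{\ws}\bigr)f\bigr\rangle\ge 0$ — the same completing-the-square device as in the proof of Lemma \ref{lem2}. Choosing $\phi_i=\psi_i$ and $\lambda=1$ then recovers the earlier results.

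I expect the only delicate point to be the bookkeeping in the second paragraph: one must see that the two ``inner'' sums reassemble exactly into $\langle S_{\ws}^{\sigma^c}S_{\ws}^{-1}S_{\ws}^{\sigma^c}f,f\rangle$ (and its $\sigma$-analogue), which hinges on sliding $S_{\ws}^{-1}$ across an inner product and then reading off a weaving-frame-operator expression evaluated at $S_{\ws}^{-1}S_{\ws}^{\sigma^c}f$. Once the statement is reduced to operators, the equality is Lemma \ref{lem1} in disguise and the inequality is a one-line perfect-square estimate, so no heavy computation remains.
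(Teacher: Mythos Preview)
Your proposal is correct and follows essentially the same route as the paper. Both reduce the sums to the quadratic forms $\langle S_{\ws}^{\sigma}f,f\rangle$, $\langle S_{\ws}^{\sigma^c}f,f\rangle$, $\langle S_{\ws}^{-1}S_{\ws}^{\sigma}f,S_{\ws}^{\sigma}f\rangle$, $\langle S_{\ws}^{-1}S_{\ws}^{\sigma^c}f,S_{\ws}^{\sigma^c}f\rangle$, pass to the self-adjoint pair $P=S_{\ws}^{-1/2}S_{\ws}^{\sigma}S_{\ws}^{-1/2}$, $Q=S_{\ws}^{-1/2}S_{\ws}^{\sigma^c}S_{\ws}^{-1/2}$ with $P+Q=I_{\hs}$, and finish with the completing-the-square estimate; the paper packages this last step as Lemma~\ref{lem2} applied at $S_{\ws}^{1/2}f$, whereas you carry out the perfect-square computation $\bigl\langle S_{\ws}^{-1}\bigl(S_{\ws}^{\sigma}-\tfrac{\lambda}{2}S_{\ws}\bigr)f,\bigl(S_{\ws}^{\sigma}-\tfrac{\lambda}{2}S_{\ws}\bigr)f\bigr\rangle\ge 0$ directly.
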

\begin{proof}
Since $\{\phi_i\}_{i\in I}$ and $\{\psi_i\}_{i\in I}$ are woven, for all $\sigma\subset I$, $\{\phi_i\}_{i\in\sigma}\cup\{\psi_i\}_{i\in\sigma^c}$ is a frame for $\hs$. Let $S_{\ws}$ be the frame operator for $\{\phi_i\}_{i\in\sigma}\cup\{\psi_i\}_{i\in\sigma^c}$. Since $S_{\ws}^{\sigma}+S_{\ws}^{\sigma^c}=S_{\ws}$, it follows that
$$S_{\ws}^{-1/2}S_{\ws}^{\sigma}S_{\ws}^{-1/2}+S_{\ws}^{-1/2}S_{\ws}^{\sigma^c}S_{\ws}^{-1/2}=I_{\hs}.$$
Considering $P=S_{\ws}^{-1/2}S_{\ws}^{\sigma}S_{\ws}^{-1/2}$, $Q=S_{\ws}^{-1/2}S_{\ws}^{\sigma^c}S_{\ws}^{-1/2}$, and $S_{\ws}^{1/2}f$ instead of $f$ in Lemma \ref{lem2}, we obtain
		\begin{align*}
&\|S_{\ws}^{-1/2}S_{\ws}^{\sigma}f\|^2+\lambda\left\langle S_{\ws}^{-1/2}S_{\ws}^{\sigma^c}f,S_{\ws}^{1/2}f\right\rangle \\
=&\| S_{\ws}^{-1/2}S_{\ws}^{\sigma^c}f\|^2+(2-\lambda)\left\langle S_{\ws}^{-1/2}S_{\ws}^{\sigma}f, S_{\ws}^{1/2}f\right\rangle +(\lambda-1)\|S_{\ws}^{1/2}f\|^2\\
\ge&(\lambda-\frac{\lambda^2}{4})\|S_{\ws}^{1/2}f\|^2,
\end{align*}
thus
		\begin{align*}
&\left\langle S_{\ws}^{-1}S_{\ws}^{\sigma}f,S_{\ws}^{\sigma}f\right\rangle +\lambda\left\langle S_{\ws}^{\sigma^c}f,f\right\rangle \\
=&\left\langle  S_{\ws}^{-1}S_{\ws}^{\sigma^c}f,S_{\ws}^{\sigma^c}f\right\rangle +(2-\lambda)\left\langle S_{\ws}^{\sigma}f, f\right\rangle +(\lambda-1)\left\langle S_{\ws}f,f\right\rangle \\
\ge&(\lambda-\frac{\lambda^2}{4})\left\langle S_{\ws}f,f\right\rangle .
\end{align*}
Then
		\begin{align*}
&\left\langle S_{\ws}^{-1}S_{\ws}^{\sigma}f,S_{\ws}^{\sigma}f\right\rangle  \\
=&\left\langle  S_{\ws}^{-1}S_{\ws}^{\sigma^c}f,S_{\ws}^{\sigma^c}f\right\rangle +2\left\langle S_{\ws}^{\sigma}f, f\right\rangle -\lambda\left\langle(S_{\ws}^{\sigma}+S_{\ws}^{\sigma^c})f,f\right\rangle   +(\lambda-1)\left\langle S_{\ws}f,f\right\rangle \\
\ge&(\lambda-\frac{\lambda^2}{4})\left\langle S_{\ws}f,f\right\rangle -\lambda\left\langle S_{\ws}^{\sigma^c}f,f\right\rangle,
\end{align*}
thus
		\begin{align}\label{2.40}
\left\langle S_{\ws}^{-1}S_{\ws}^{\sigma}f,S_{\ws}^{\sigma}f\right\rangle 
=&\left\langle  S_{\ws}^{-1}S_{\ws}^{\sigma^c}f,S_{\ws}^{\sigma^c}f\right\rangle +2\left\langle S_{\ws}^{\sigma}f, f\right\rangle -\left\langle S_{\ws}f,f\right\rangle \nonumber\\
\ge&\lambda\left\langle S_{\ws}^{\sigma}f,f\right\rangle -\frac{\lambda^2}{4}\left\langle S_{\ws}f,f\right\rangle,
\end{align}
hence
	\begin{align}\label{2.41}
\left\langle S_{\ws}^{-1}S_{\ws}^{\sigma}f,S_{\ws}^{\sigma}f\right\rangle +\left\langle S_{\ws}^{\sigma^c}f,f\right\rangle 
=&\left\langle  S_{\ws}^{-1}S_{\ws}^{\sigma^c}f,S_{\ws}^{\sigma^c}f\right\rangle +\left\langle S_{\ws}^{\sigma}f,f\right\rangle \nonumber\\
\ge&(\lambda-\frac{\lambda^2}{4})\left\langle S_{\ws}^{\sigma}f,f\right\rangle +(1-\frac{\lambda^2}{4})\left\langle S_{\ws}^{\sigma^c}f,f\right\rangle.
\end{align}
We have
	\begin{align}\label{2.4}
\left\langle S_{\ws}^{-1}S_{\ws}^{\sigma}f,S_{\ws}^{\sigma}f\right\rangle &=\left\langle S_{\ws}S_{\ws}^{-1}S_{\ws}^{\sigma}f,S_{\ws}^{-1}S_{\ws}^{\sigma}f\right\rangle\nonumber\\
&=\left\langle \sum_{i\in \sigma}\left\langle S_{\ws}^{-1}S_{\ws}^{\sigma}f,\phi_i\right\rangle\phi_i+\sum_{i\in \sigma^c}\left\langle S_{\ws}^{-1}S_{\ws}^{\sigma}f,\psi_i\right\rangle\psi_i ,S_{\ws}^{-1}S_{\ws}^{\sigma}f\right\rangle \nonumber\\
&=\left\langle \sum_{i\in \sigma}\left\langle S_{\ws}^{-1}S_{\ws}^{\sigma}f,\phi_i\right\rangle\phi_i,S_{\ws}^{-1}S_{\ws}^{\sigma}f\right\rangle+\left\langle \sum_{i\in \sigma}\left\langle S_{\ws}^{-1}S_{\ws}^{\sigma}f,\psi_i\right\rangle\psi_i,S_{\ws}^{-1}S_{\ws}^{\sigma}f\right\rangle\nonumber\\
&=\sum_{i\in \sigma}\left| \left\langle S_{\ws}^{\sigma}f,S_{\ws}^{-1}\phi_i\right\rangle \right| ^2+\sum_{i\in \sigma^c}\left| \left\langle S_{\ws}^{\sigma}f,S_{\ws}^{-1}\psi_i\right\rangle \right| ^2.
\end{align}
Similarly
\begin{equation}\label{2.5}
\left\langle  S_{\ws}^{-1}S_{\ws}^{\sigma^c}f,S_{\ws}^{\sigma^c}f\right\rangle=\sum_{i\in\sigma}\left| \left\langle S^{\sigma^c}_{\ws}f,S_{\ws}^{-1}\phi_i\right\rangle \right| ^2+\sum_{i\in\sigma^c}\left| \left\langle S^{\sigma^c}_{\ws}f,S_{\ws}^{-1}\psi_i\right\rangle \right| ^2.
\end{equation}

\begin{equation}\label{2.6}
\left\langle S_{\ws}^{\sigma^c}f,f\right\rangle =\sum_{i\in \sigma^c}\left| \left\langle f,\psi_i\right\rangle \right| ^2.
\end{equation}

\begin{equation}\label{2.7}
\left\langle S_{\ws}^{\sigma}f,f\right\rangle =\sum_{i\in \sigma^c}\left| \left\langle f,\phi_i\right\rangle \right| ^2.
\end{equation}
Using equations \eqref{2.41}-\eqref{2.7} in the inequality \eqref{2.3}, we obtain
\begin{align*}
&\sum_{i\in\sigma}\left| \left\langle f,\phi_i\right\rangle \right| ^2+\sum_{i\in\sigma}\left| \left\langle S^{\sigma^c}_{\ws}f,S_{\ws}^{-1}\phi_i\right\rangle \right| ^2+\sum_{i\in\sigma^c}\left| \left\langle S^{\sigma^c}_{\ws}f,S_{\ws}^{-1}\psi_i\right\rangle \right| ^2\\
=&\sum_{i\in\sigma^c}\left| \left\langle f,\psi_i\right\rangle \right| ^2+\sum_{i\in\sigma}\left| \left\langle S^{\sigma}_{\ws}f,S_{\ws}^{-1}\phi_i\right\rangle \right| ^2+\sum_{i\in\sigma^c}\left| \left\langle S^{\sigma}_{\ws}f,S_{\ws}^{-1}\psi_i\right\rangle \right| ^2\\
\ge &(\lambda-\frac{\lambda^2}{4})\sum_{i\in\sigma}\left| \left\langle f,\phi_i\right\rangle \right| ^2+(1-\frac{\lambda^2}{4})\sum_{i\in\sigma^c}\left| \left\langle f,\psi_i\right\rangle \right| ^2.
\end{align*}
\end{proof}
\begin{remark}
If we take $\phi_i=\psi_i$ for all $i\in I$ and $\lambda=1$ in Theorem \ref{thm7.1}, we can obtain Theorem \ref{thm2} with scalar $3/4$.
\end{remark}
\begin{lemma}\label{lem3}
If $P,~Q\in L(\hs)$ satisfy $P+Q=I_{\hs}$.  Then for any $\lambda\in\rs$, we have
\begin{equation*}
P^*P+\lambda(Q^*+Q)=Q^*Q+(1-\lambda)(P^*+P)+(2\lambda-1)I_{\hs}\ge (1-(\lambda-1)^2)I_{\hs}.
\end{equation*}
\end{lemma}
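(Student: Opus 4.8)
The plan is to follow the pattern of the proofs of Lemma \ref{lem1} and Lemma \ref{lem2}: first establish the operator identity by eliminating $Q$ through the relation $Q=I_{\hs}-P$, and then extract the lower bound by a completion-of-square argument. No self-adjointness hypothesis is needed, only $P+Q=I_{\hs}$.

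First I would substitute $Q=I_{\hs}-P$, hence $Q^*=I_{\hs}-P^*$, into both sides and show they reduce to a common expression. For the left-hand side,
$$P^*P+\lambda(Q^*+Q)=P^*P+\lambda(2I_{\hs}-P^*-P)=P^*P-\lambda(P^*+P)+2\lambda I_{\hs}.$$
For the right-hand side, using $Q^*Q=(I_{\hs}-P^*)(I_{\hs}-P)=I_{\hs}-P-P^*+P^*P$ and collecting terms,
$$Q^*Q+(1-\lambda)(P^*+P)+(2\lambda-1)I_{\hs}=P^*P+\bigl(-1+(1-\lambda)\bigr)(P^*+P)+\bigl(1+(2\lambda-1)\bigr)I_{\hs},$$
which is again $P^*P-\lambda(P^*+P)+2\lambda I_{\hs}$. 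This proves the equality.

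For the inequality I would observe that $(P-\lambda I_{\hs})^*(P-\lambda I_{\hs})$ is a positive operator, and since $\lambda\in\rs$,
$$0\le (P-\lambda I_{\hs})^*(P-\lambda I_{\hs})=P^*P-\lambda(P^*+P)+\lambda^2 I_{\hs},$$
so that $P^*P-\lambda(P^*+P)\ge-\lambda^2 I_{\hs}$. Adding $2\lambda I_{\hs}$ to the common expression obtained above yields
$$P^*P+\lambda(Q^*+Q)=P^*P-\lambda(P^*+P)+2\lambda I_{\hs}\ge (2\lambda-\lambda^2)I_{\hs}=\bigl(1-(\lambda-1)^2\bigr)I_{\hs},$$
since $2\lambda-\lambda^2=1-(1-2\lambda+\lambda^2)=1-(\lambda-1)^2$.

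There is no real obstacle in this argument; the only points requiring care are the bookkeeping of the coefficients of $(P^*+P)$ and $I_{\hs}$ when expanding the right-hand side, and using the correct shift in the completion of the square — here it is $\lambda I_{\hs}$ rather than $\tfrac{\lambda}{2}I_{\hs}$ that appears (contrast with Lemma \ref{lem2}), which is exactly what produces the bound $1-(\lambda-1)^2$. Everything else is a direct computation with bounded operators.
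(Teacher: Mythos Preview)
Your proof is correct and follows essentially the same approach as the paper: both reduce each side to the common expression $P^*P-\lambda(P^*+P)+2\lambda I_{\hs}$ via $Q=I_{\hs}-P$, and then complete the square as $(P-\lambda I_{\hs})^*(P-\lambda I_{\hs})+(2\lambda-\lambda^2)I_{\hs}$ to obtain the lower bound. Your presentation is slightly more explicit in the bookkeeping, but the argument is the same.
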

\begin{proof}
	$$P^*P+\lambda(Q^*+Q)=P^*P+\lambda(I_{\hs}-P^*+I_P)=P^*P-\lambda(P^*+P)+2\lambda I_{\hs},$$
	and
	\begin{align*}
Q^*Q+(1-\lambda)(P^*+P)+(2\lambda-1)I_{\hs}
=&(I_{\hs}-P^*)(I_{\hs}-P)+(1-\lambda)(P^*+P)+(2\lambda-1)I_{\hs}\\
=&P^*P-\lambda(P^*+P)+2\lambda I_{\hs}\\
=&(P-\lambda I_{\hs})^*(P-\lambda I_{\hs})+(1-(\lambda-1)^2)I_{\hs}\\
\ge& (1-(\lambda-1)^2)I_{\hs}.
\end{align*}	
Hence the result follows.
\end{proof}
\begin{theorem}\label{thm3}
	Suppose two frames $\{\phi_i\}_{i\in I}$ and $\{\psi_i\}_{i\in I}$  for a Hilbert space $\hs$ are woven and $\{\varphi_i\}_{i\in I}$  is an alternate dual frame of the weaving frame $\{\phi_i\}_{i\in\sigma}\cup\{\psi_i\}_{i\in\sigma^c}$. Then for any $\lambda\in\rs$, for all $\sigma\subset I$ and all $f\in\hs$, we have
	\begin{align}\label{2.8}
	&{\rm Re}\left(  \sum_{i\in\sigma}\left\langle f,\varphi_i\right\rangle \overline{\left\langle f,\phi_i\right\rangle } \right)+\left\| \sum_{i\in\sigma^c}\left\langle f,\varphi_i\right\rangle \psi_i\right\| ^2  \nonumber\\
	=&{\rm Re}\left(  \sum_{i\in\sigma^c}\left\langle f,\varphi_i\right\rangle \overline{\left\langle f,\psi_i\right\rangle } \right)+\left\| \sum_{i\in\sigma}\left\langle f,\varphi_i\right\rangle \phi_i\right\| ^2\nonumber\\
	\ge &(2\lambda-\lambda^2){\rm Re}\left(  \sum_{i\in\sigma}\left\langle f,\varphi_i\right\rangle \overline{\left\langle f,\phi_i\right\rangle } \right)+(1-\lambda^2){\rm Re}\left(  \sum_{i\in\sigma^c}\left\langle f,\varphi_i\right\rangle \overline{\left\langle f,\psi_i\right\rangle } \right).
	\end{align}
\end{theorem}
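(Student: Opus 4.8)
The plan is to reproduce the operator argument behind Theorems~\ref{thm2.1} and~\ref{thm2.2}, now anchored at the weaving frame and powered by Lemmas~\ref{lem1} and~\ref{lem3}. Fix $\sigma\subset I$ and define bounded operators on $\hs$ by
\[
Pf=\sum_{i\in\sigma}\langle f,\varphi_i\rangle\phi_i,\qquad Qf=\sum_{i\in\sigma^c}\langle f,\varphi_i\rangle\psi_i;
\]
these are well defined since $\{\varphi_i\}_{i\in I}$ is a frame (so $\{\langle f,\varphi_i\rangle\}\in\ell^2$) and $\{\phi_i\}_{i\in I},\{\psi_i\}_{i\in I}$ are Bessel. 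The opening step is to verify $P+Q=I_{\hs}$. If $V$ denotes the analysis operator of $\{\varphi_i\}_{i\in I}$ and $U$ that of $\{\phi_i\}_{i\in\sigma}\cup\{\psi_i\}_{i\in\sigma^c}$, then the alternate dual relation~\eqref{1.1} reads $V^{*}U=I_{\hs}$; taking adjoints gives $U^{*}V=I_{\hs}$, which is precisely $Pf+Qf=f$ for all $f\in\hs$.

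Next I would set up the dictionary that turns the statement into an operator inequality: for every $f\in\hs$,
\[
\langle Pf,f\rangle=\sum_{i\in\sigma}\langle f,\varphi_i\rangle\overline{\langle f,\phi_i\rangle},\qquad \langle Qf,f\rangle=\sum_{i\in\sigma^c}\langle f,\varphi_i\rangle\overline{\langle f,\psi_i\rangle},
\]
together with $\|Pf\|^2=\bigl\|\sum_{i\in\sigma}\langle f,\varphi_i\rangle\phi_i\bigr\|^2$ and $\|Qf\|^2=\bigl\|\sum_{i\in\sigma^c}\langle f,\varphi_i\rangle\psi_i\bigr\|^2$. Under this dictionary the first equality in~\eqref{2.8} becomes ${\rm Re}\langle Pf,f\rangle+\|Qf\|^2={\rm Re}\langle Qf,f\rangle+\|Pf\|^2$. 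This I would get straight from Lemma~\ref{lem1} (which needs no self-adjointness): $P+Q^{*}Q=Q^{*}+P^{*}P$ gives $\langle Pf,f\rangle+\|Qf\|^2=\overline{\langle Qf,f\rangle}+\|Pf\|^2$, and taking real parts yields the identity.

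For the lower bound I would apply Lemma~\ref{lem3} to the same $P,Q$. Noting $1-(\lambda-1)^2=2\lambda-\lambda^2$, Lemma~\ref{lem3} gives $P^{*}P+\lambda(Q^{*}+Q)\ge(2\lambda-\lambda^2)I_{\hs}$, hence
\[
\|Pf\|^2+2\lambda\,{\rm Re}\langle Qf,f\rangle\ \ge\ (2\lambda-\lambda^2)\|f\|^2 .
\]
Since $\langle Pf,f\rangle+\langle Qf,f\rangle=\|f\|^2$ is real, $\|f\|^2={\rm Re}\langle Pf,f\rangle+{\rm Re}\langle Qf,f\rangle$; substituting and rearranging gives $\|Pf\|^2\ge(2\lambda-\lambda^2){\rm Re}\langle Pf,f\rangle-\lambda^2{\rm Re}\langle Qf,f\rangle$, and adding ${\rm Re}\langle Qf,f\rangle$ to both sides produces exactly the right-hand side of~\eqref{2.8}. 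Translating back through the dictionary completes the proof.

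All of this is routine bookkeeping; the only delicate point is the very first step $P+Q=I_{\hs}$ — namely, that~\eqref{1.1} may be read with the roles of $\{\varphi_i\}$ and the woven system interchanged, which rests on the Bessel bounds that make $P,Q$ bounded (so the series manipulations and the passage to adjoints are legitimate). Everything after that is just Lemmas~\ref{lem1} and~\ref{lem3} combined with $\langle(P+Q)f,f\rangle=\|f\|^2$.
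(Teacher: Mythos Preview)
Your proposal is correct and follows essentially the same route as the paper: define the operators $E_\sigma=P$, $E_{\sigma^c}=Q$, use the alternate-dual relation to get $P+Q=I_{\hs}$, translate everything through the same dictionary, and invoke Lemma~\ref{lem3} for the $\lambda$-dependent lower bound. The only (cosmetic) divergence is that for the equality part the paper runs through Lemma~\ref{lem3} and then cancels the $\lambda$-terms by hand, whereas you go directly via Lemma~\ref{lem1} and take real parts; this is a genuine streamlining, and your explicit adjoint argument justifying $P+Q=I_{\hs}$ from~\eqref{1.1} is a point the paper glosses over.
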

\begin{proof}
	For all $f\in\hs$ and all $\sigma\subset I$, define the operators 
	$$E_{\sigma}f=\sum_{i\in\sigma}\left\langle f,\varphi_i\right\rangle \phi_i,~~~~E_{\sigma^c}f=\sum_{i\in\sigma^c}\left\langle f,\varphi_i\right\rangle \psi_i.$$
	Then the series converge unconditionally and $E_{\sigma},E_{\sigma^c}\in L(\hs)$.  By \eqref{1.1}, we have $E_{\sigma}+E_{\sigma^c}=I_{\hs}$. Applying Lemma \ref{lem3} to the operators $P=E_{\sigma}$ and $Q=E_{\sigma^c}$, for all $f\in\hs$, we obtain
	\begin{align}
	&\quad\left\langle E_{\sigma}^*E_{\sigma}f,f\right\rangle +\lambda\left\langle (E_{\sigma^c}^*+E_{\sigma^c})f,f\right\rangle\nonumber \\
	&=\left\langle E_{\sigma}^*E_{\sigma}f,f\right\rangle +\lambda\overline{\left\langle E_{\sigma^c}f,f\right\rangle}+\lambda\left\langle E_{\sigma}f,f\right\rangle\label{2.9}\\
	&=\left\langle E_{\sigma^c}^*E_{\sigma^c}f,f\right\rangle  +(1-\lambda)\left\langle (E_{\sigma}^*+E_{\sigma})f,f\right\rangle+(2\lambda-1)\|f\|^2 \nonumber\\
	&=\left\langle E_{\sigma^c}^*E_{\sigma^c}f,f\right\rangle+(1-\lambda)(\overline{\left\langle E_{\sigma}f,f\right\rangle }+\left\langle E_{\sigma}f,f\right\rangle)+(2\lambda-1)\left\langle I_{\hs}f,f\right\rangle .\label{2.10}
	\end{align}
	A simple computation of  \eqref{2.9} and \eqref{2.10}, we have
	$$\|E_{\sigma}f\|^2+2\lambda{\rm Re}\left\langle E_{\sigma^c}f,f\right\rangle =\|E_{\sigma^c}f\|^2+2(1-\lambda){\rm Re}\left\langle E_{\sigma}f,f\right\rangle+(2\lambda-1){\rm Re}\left\langle I_{\hs}f,f\right\rangle.$$
	Then,
	\begin{align*}
	\|E_{\sigma}f\|^2&=\|E_{\sigma^c}f\|^2+2(1-\lambda){\rm Re}\left\langle E_{\sigma}f,f\right\rangle-2\lambda{\rm Re}\left\langle E_{\sigma^c}f,f\right\rangle+(2\lambda-1){\rm Re}\left\langle I_{\hs}f,f\right\rangle\\
	&=\|E_{\sigma^c}f\|^2+2{\rm Re}\left\langle E_{\sigma}f,f\right\rangle-2\lambda{\rm Re}\left\langle (E_{\sigma}+E_{\sigma^c})f,f\right\rangle+(2\lambda-1){\rm Re}\left\langle I_{\hs}f,f\right\rangle\\
	&=\|E_{\sigma^c}f\|^2+2{\rm Re}\left\langle E_{\sigma}f,f\right\rangle-{\rm Re}\left\langle I_{\hs}f,f\right\rangle\\
	&=\|E_{\sigma^c}f\|^2+2{\rm Re}\left\langle E_{\sigma}f,f\right\rangle-{\rm Re}\left\langle (E_{\sigma}+E_{\sigma^c})f,f\right\rangle\\
	&=\|E_{\sigma^c}f\|^2+{\rm Re}\left\langle E_{\sigma}f,f\right\rangle-{\rm Re}\left\langle E_{\sigma^c}f,f\right\rangle.
	\end{align*}
	Hence,
	\begin{equation}\label{2.11}
\|E_{\sigma}f\|^2+{\rm Re}\left\langle E_{\sigma^c}f,f\right\rangle=\|E_{\sigma^c}f\|^2+{\rm Re}\left\langle E_{\sigma}f,f\right\rangle.
	\end{equation}
	Since,
	\begin{equation}\label{2.12}
\|E_{\sigma}f\|^2=\left\| \sum_{i\in \sigma}\left\langle f,\varphi_i\right\rangle \phi_i\right\| ^2.
	\end{equation}
	\begin{equation}\label{2.13}
{\rm Re}\left\langle E_{\sigma^c}f,f\right\rangle={\rm Re}\left(  \sum_{i\in\sigma^c}\left\langle f,\varphi_i\right\rangle \overline{\left\langle f,\psi_i\right\rangle } \right).
\end{equation}
	\begin{equation}\label{2.14}
\|E_{\sigma^c}f\|^2=\left\| \sum_{i\in \sigma}\left\langle f,\varphi_i\right\rangle \psi_i\right\| ^2.
\end{equation}
\begin{equation}\label{2.15}
{\rm Re}\left\langle E_{\sigma}f,f\right\rangle={\rm Re}\left(  \sum_{i\in\sigma}\left\langle f,\varphi_i\right\rangle \overline{\left\langle f,\phi_i\right\rangle } \right).
\end{equation}
Using equations \eqref{2.11}-\eqref{2.15}, we have
\begin{align*}
{\rm Re}\left(  \sum_{i\in\sigma}\left\langle f,\varphi_i\right\rangle \overline{\left\langle f,\phi_i\right\rangle } \right)+\left\| \sum_{i\in\sigma^c}\left\langle f,\varphi_i\right\rangle \psi_i\right\| ^2
={\rm Re}\left(  \sum_{i\in\sigma^c}\left\langle f,\varphi_i\right\rangle \overline{\left\langle f,\psi_i\right\rangle } \right)+\left\| \sum_{i\in\sigma}\left\langle f,\varphi_i\right\rangle \phi_i\right\| ^2.
\end{align*}
	We now prove the inequality of \eqref{2.8}. 	From Lemma \ref{lem3}, we have
	\begin{equation}\label{2.16}
\left\langle E_{\sigma}^*E_{\sigma}f,f\right\rangle +\lambda\overline{\left\langle E_{\sigma^c}f,f\right\rangle}+\lambda\left\langle E_{\sigma^c}f,f\right\rangle\ge (2\lambda-\lambda^2)\left\langle I_{\hs}f,f\right\rangle.
	\end{equation}
Then
	$$\|E_{\sigma}f\|^2+2\lambda{\rm Re}\left\langle E_{\sigma^c}f,f\right\rangle \ge (2\lambda-\lambda^2){\rm Re}\left\langle I_{\hs}f,f\right\rangle,$$
	hence
	\begin{align*}
	\|E_{\sigma}f\|^2&\ge (2\lambda-\lambda^2){\rm Re}\left\langle I_{\hs}f,f\right\rangle-2\lambda{\rm Re}\left\langle E_{\sigma^c}f,f\right\rangle\\
	&=(2\lambda-\lambda^2){\rm Re}\left\langle (E_{\sigma}+E_{\sigma^c})f,f\right\rangle-2\lambda{\rm Re}\left\langle E_{\sigma^c}f,f\right\rangle\\
	&=(2\lambda-\lambda^2){\rm Re}\left\langle E_{\sigma}f,f\right\rangle -\lambda^2{\rm Re}\left\langle E_{\sigma^c}f,f\right\rangle \\
	&=(2\lambda-\lambda^2){\rm Re}\left\langle E_{\sigma}f,f\right\rangle+(1-\lambda^2){\rm Re}\left\langle E_{\sigma^c}f,f\right\rangle -{\rm Re}\left\langle E_{\sigma^c}f,f\right\rangle.
	\end{align*}
	Therefore,
	\begin{equation}\label{2.17}
	\|E_{\sigma}f\|^2+{\rm Re}\left\langle E_{\sigma^c}f,f\right\rangle\ge (2\lambda-\lambda^2){\rm Re}\left\langle E_{\sigma}f,f\right\rangle+(1-\lambda^2){\rm Re}\left\langle E_{\sigma^c}f,f\right\rangle.
	\end{equation}
Using equations \eqref{2.12}-\eqref{2.15} and \eqref{2.17}, we have	
$${\rm Re}\left(  \sum_{i\in\sigma}\left\langle f,\varphi_i\right\rangle \overline{\left\langle f,\phi_i\right\rangle } \right)+\left\| \sum_{i\in\sigma^c}\left\langle f,\varphi_i\right\rangle \psi_i\right\| ^2\ge (2\lambda-\lambda^2){\rm Re}\left(  \sum_{i\in\sigma}\left\langle f,\varphi_i\right\rangle \overline{\left\langle f,\phi_i\right\rangle } \right)+(1-\lambda^2){\rm Re}\left(  \sum_{i\in\sigma^c}\left\langle f,\varphi_i\right\rangle \overline{\left\langle f,\psi_i\right\rangle } \right).$$
The proof is completed.
\end{proof}
\begin{remark}
Theorem \ref{thm2.1} can be obtained from Theorem \ref{thm3} by taking $\phi_i=\psi_i$ for all $i\in I$ and $\lambda=\frac{1}{2}$.
\end{remark}
\begin{theorem}\label{thm4}
	Suppose $\Phi=\{\phi_i\}_{i\in I}$ and $\Psi=\{\psi_i\}_{i\in I}$  for a Hilbert space $\hs$ are woven and $\{\varphi_i\}_{i\in I}$  is an alternate dual frame of the weaving frame $\{\phi_i\}_{i\in\sigma}\cup\{\phi_i\}_{i\in\sigma^c}$. Then for any $\lambda\in\rs$, for all $\sigma\subset I$ and all $f\in\hs$, we have
	\begin{align}\label{2.18}
	\left(  \sum_{i\in\sigma}\left\langle f,\varphi_i\right\rangle \overline{\left\langle f,\phi_i\right\rangle}  \right)+\left\| \sum_{i\in\sigma^c}\left\langle f,\varphi_i\right\rangle \psi_i\right\| ^2
	=\overline{\left(  \sum_{i\in\sigma^c}\left\langle f,\varphi_i\right\rangle \overline{\left\langle f,\psi_i\right\rangle } \right)}+\left\| \sum_{i\in\sigma}\left\langle f,\varphi\right\rangle \phi_i\right\| ^2.	 
	\end{align}
\end{theorem}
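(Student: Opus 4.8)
The plan is to reproduce, in the setting of Theorem \ref{thm4}, the operator‑theoretic mechanism already used for Theorem \ref{thm3}, but now invoking Lemma \ref{lem1} (the $\lambda$‑free identity $P+Q^*Q=Q^*+P^*P$) rather than Lemma \ref{lem3}; this is exactly why no real parts appear and why $\lambda$ will ultimately be immaterial to the stated identity. First I would, for the fixed $\sigma\subset I$ and the fixed alternate dual $\{\varphi_i\}_{i\in I}$ of the weaving $\{\phi_i\}_{i\in\sigma}\cup\{\psi_i\}_{i\in\sigma^c}$, introduce the bounded linear operators
\[
E_{\sigma}f=\sum_{i\in\sigma}\left\langle f,\varphi_i\right\rangle\phi_i,\qquad E_{\sigma^c}f=\sum_{i\in\sigma^c}\left\langle f,\varphi_i\right\rangle\psi_i .
\]
Since $\{\varphi_i\}$ is a frame and $\{\phi_i\},\{\psi_i\}$ are (Bessel, being) frames, the synthesis/analysis operators involved are bounded, so these series converge unconditionally and $E_{\sigma},E_{\sigma^c}\in L(\hs)$. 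By the alternate dual identity \eqref{1.1} one has $E_{\sigma}+E_{\sigma^c}=I_{\hs}$.

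Next I would apply Lemma \ref{lem1} with $P=E_{\sigma}$ and $Q=E_{\sigma^c}$, obtaining the operator identity
\[
E_{\sigma}+E_{\sigma^c}^{*}E_{\sigma^c}=E_{\sigma^c}^{*}+E_{\sigma}^{*}E_{\sigma}.
\]
Pairing this with an arbitrary $f\in\hs$ gives
\[
\left\langle E_{\sigma}f,f\right\rangle+\left\|E_{\sigma^c}f\right\|^2
=\left\langle E_{\sigma^c}^{*}f,f\right\rangle+\left\|E_{\sigma}f\right\|^2
=\overline{\left\langle E_{\sigma^c}f,f\right\rangle}+\left\|E_{\sigma}f\right\|^2 ,
\]
using $\langle E_{\sigma^c}^{*}f,f\rangle=\overline{\langle E_{\sigma^c}f,f\rangle}$.

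Finally I would translate the four quantities back into the frame‑coefficient form of the statement: a direct expansion shows
\(\langle E_{\sigma}f,f\rangle=\sum_{i\in\sigma}\langle f,\varphi_i\rangle\overline{\langle f,\phi_i\rangle}\),
\(\langle E_{\sigma^c}f,f\rangle=\sum_{i\in\sigma^c}\langle f,\varphi_i\rangle\overline{\langle f,\psi_i\rangle}\),
\(\|E_{\sigma}f\|^2=\big\|\sum_{i\in\sigma}\langle f,\varphi_i\rangle\phi_i\big\|^2\), and
\(\|E_{\sigma^c}f\|^2=\big\|\sum_{i\in\sigma^c}\langle f,\varphi_i\rangle\psi_i\big\|^2\);
substituting these into the displayed equality yields \eqref{2.18} verbatim. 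There is no genuine obstacle here — the argument is short and essentially a bookkeeping translation of Lemma \ref{lem1}; the only points requiring care are (i) checking that $E_{\sigma},E_{\sigma^c}$ are well‑defined bounded operators with unconditionally convergent defining series, and (ii) noting that, although the theorem is phrased "for any $\lambda\in\rs$", the identity is in fact $\lambda$‑independent — one may equivalently derive it from the $\lambda$‑parametrized relation of Lemma \ref{lem3} applied to $P=E_{\sigma}$, $Q=E_{\sigma^c}$, since the $\lambda$‑dependent terms occurring on the two sides cancel, leaving precisely \eqref{2.18}.
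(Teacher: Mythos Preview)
Your proposal is correct and follows essentially the same route as the paper: define $E_{\sigma}$, $E_{\sigma^c}$ as in Theorem~\ref{thm3}, use $E_{\sigma}+E_{\sigma^c}=I_{\hs}$, apply Lemma~\ref{lem1}, and unwind the inner products. Your observation that the $\lambda$ in the statement is immaterial is also apt --- the paper's proof does not use it either.
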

\begin{proof}
For $\sigma\subset I$ and $f\in\hs$, we define the operator $E_{\sigma}$ and $E_{\sigma^c}$ as in Theorem \ref{thm3}. Therefore, we have $E_{\sigma}+E_{\sigma^c}=I_{\hs}$. By Lemma \ref{lem1}, we have
	\begin{align*}
	\left(  \sum_{i\in\sigma}\left\langle f,\varphi_i\right\rangle \overline{\left\langle f,\phi_i\right\rangle}  \right)+\left\| \sum_{i\in\sigma^c}\left\langle f,\varphi_i\right\rangle \psi_i\right\| ^2
&=\left\langle E_{\sigma}f,f\right\rangle +\left\langle E_{\sigma^c}^*E_{\sigma^c}f,f\right\rangle \\
&=\left\langle E_{\sigma^c}^*f,f\right\rangle +\left\langle E_{\sigma}^*E_{\sigma}f,f\right\rangle\\
&=\overline{\left\langle E_{\sigma^c}^*f,f\right\rangle}	 +\|E_{\sigma}f\|^2\\
&=\overline{\left(  \sum_{i\in\sigma^c}\left\langle f,\varphi_i\right\rangle \overline{\left\langle f,\psi_i\right\rangle } \right)}+\left\| \sum_{i\in\sigma}\left\langle f,\varphi\right\rangle \phi_i\right\| ^2.
\end{align*}
Hence \eqref{2.18} holds. The proof is completed.
\end{proof}
\begin{theorem}\label{thm5}
	Suppose two frames $\{\phi_i\}_{i\in I}$ and $\{\psi_i\}_{i\in I}$  for a Hilbert space $\hs$ are woven and $\{\varphi_i\}_{i\in I}$  is an alternate dual frame of the weaving frame $\{\phi_i\}_{i\in\sigma}\cup\{\phi_i\}_{i\in\sigma^c}$.  Then for every bounded sequence $\{a_i\}_{i\in I}$ and every $f\in\hs$, we have
\begin{align*}
&\left(  \sum_{i\in\sigma}a_i\left\langle f,\varphi_i\right\rangle \overline{\left\langle f,\phi_i\right\rangle}  \right)+\left(  \sum_{i\in\sigma^c}a_i\left\langle f,\varphi_i\right\rangle \overline{\left\langle f,\psi_i\right\rangle}  \right)+\left\| \sum_{i\in\sigma^c}(1-a_i)\left\langle f,\varphi_i\right\rangle \psi_i+ \sum_{i\in\sigma}(1-a_i)\left\langle f,\varphi_i\right\rangle \phi_i\right\| ^2\\
=&\left\| \sum_{i\in\sigma}a_i\left\langle f,\varphi_i\right\rangle \phi_i+ \sum_{i\in\sigma^c}a_i\left\langle f,\varphi_i\right\rangle \psi_i\right\| ^2+\overline{\left(  \sum_{i\in\sigma}(1-a_i)\left\langle f,\varphi_i\right\rangle \overline{\left\langle f,\phi_i\right\rangle } \right)}+\overline{\left(  \sum_{i\in\sigma^c}(1-a_i)\left\langle f,\varphi_i\right\rangle \overline{\left\langle f,\psi_i\right\rangle } \right)}
.	 
\end{align*}	
\end{theorem}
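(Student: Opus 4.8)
\section*{Proof proposal for Theorem \ref{thm5}}

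The plan is to run the same operator-theoretic argument used for Theorems \ref{thm3} and \ref{thm4}, but now absorbing the bounded weights $\{a_i\}_{i\in I}$ into the operators. For $f\in\hs$ I would set
$$Ef=\sum_{i\in\sigma}a_i\left\langle f,\varphi_i\right\rangle\phi_i+\sum_{i\in\sigma^c}a_i\left\langle f,\varphi_i\right\rangle\psi_i,\qquad Ff=\sum_{i\in\sigma}(1-a_i)\left\langle f,\varphi_i\right\rangle\phi_i+\sum_{i\in\sigma^c}(1-a_i)\left\langle f,\varphi_i\right\rangle\psi_i.$$
Since $\{a_i\}$ and $\{1-a_i\}$ are bounded and $\{\varphi_i\}$, $\{\phi_i\}$, $\{\psi_i\}$ are Bessel sequences, both series converge unconditionally and $E,F\in L(\hs)$; this is exactly the well-definedness remark already invoked for $E_{\sigma},E_{\sigma^c}$ in the proof of Theorem \ref{thm3}.

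The crucial algebraic observation is that $E+F=I_{\hs}$: summing the two definitions term by term collapses the weights via $a_i+(1-a_i)=1$, and the resulting expression $\sum_{i\in\sigma}\left\langle f,\varphi_i\right\rangle\phi_i+\sum_{i\in\sigma^c}\left\langle f,\varphi_i\right\rangle\psi_i$ equals $f$ by the alternate dual weaving frame relation \eqref{1.1}. I would then apply Lemma \ref{lem1} with $P=E$ and $Q=F$ to obtain $E+F^*F=F^*+E^*E$, and pair both sides with $f$, which gives
$$\left\langle Ef,f\right\rangle+\|Ff\|^2=\overline{\left\langle Ff,f\right\rangle}+\|Ef\|^2.$$

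It then remains only to expand the four terms. Using $\left\langle\phi_i,f\right\rangle=\overline{\left\langle f,\phi_i\right\rangle}$ and $\left\langle\psi_i,f\right\rangle=\overline{\left\langle f,\psi_i\right\rangle}$, one has $\left\langle Ef,f\right\rangle=\sum_{i\in\sigma}a_i\left\langle f,\varphi_i\right\rangle\overline{\left\langle f,\phi_i\right\rangle}+\sum_{i\in\sigma^c}a_i\left\langle f,\varphi_i\right\rangle\overline{\left\langle f,\psi_i\right\rangle}$, and the same formula for $\left\langle Ff,f\right\rangle$ with $a_i$ replaced by $1-a_i$; the quantities $\|Ef\|^2$ and $\|Ff\|^2$ are already, verbatim, the two squared-norm terms in the statement. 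Substituting these expressions into the displayed identity produces the claimed equality. I do not expect any genuine obstacle here; the only point demanding care is bookkeeping — ensuring that the conjugations attach to the $(1-a_i)$-terms on the right-hand side, which they do precisely because Lemma \ref{lem1} contributes $F^{*}$ (hence $\overline{\left\langle Ff,f\right\rangle}$) rather than $F$.
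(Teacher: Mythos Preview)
Your proposal is correct and follows essentially the same route as the paper: the paper defines four operators $E_{\sigma},E_{\sigma^c},F_{\sigma},F_{\sigma^c}$ and applies Lemma \ref{lem1} to $P=E_{\sigma}+E_{\sigma^c}$ and $Q=F_{\sigma}+F_{\sigma^c}$, which are exactly your $E$ and $F$. The remaining identification of $\left\langle Ef,f\right\rangle$, $\left\langle Ff,f\right\rangle$, $\|Ef\|^2$, $\|Ff\|^2$ with the displayed sums is carried out in the same way.
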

\begin{proof}
For all $\sigma\subset I$ and $f\in\hs$, we define the operators 
$$E_{\sigma}f=\sum_{i\in \sigma}a_i\left\langle f,\varphi_i\right\rangle \phi_i, ~~E_{\sigma^c}f=\sum_{i\in \sigma^c}a_i\left\langle f,\varphi_i\right\rangle \psi_i,$$ 
and 
$$F_{\sigma}f=\sum_{i\in \sigma}(1-a_i)\left\langle f,\varphi_i\right\rangle \phi_i,~~F_{\sigma^c}f=\sum_{i\in \sigma}(1-a_i)\left\langle f,\varphi_i\right\rangle \psi_i.$$  Note that these series converge unconditionally. Also we have $E_{\sigma},E_{\sigma^c},F_{\sigma}, F_{\sigma^c}\in L(\hs)$ and $E_{\sigma}+E_{\sigma^c}+F_{\sigma}+ F_{\sigma^c}=I_{\hs}$. Applying Lemma \ref{lem1} to the operators $P=E_{\sigma}+E_{\sigma^c}$ and $Q=F_{\sigma}+F_{\sigma^c}$ and for every $f\in\hs$, we have
\begin{align*}
&\left(  \sum_{i\in\sigma}a_i\left\langle f,\varphi_i\right\rangle \overline{\left\langle f,\phi_i\right\rangle}  \right)+\left(  \sum_{i\in\sigma^c}a_i\left\langle f,\varphi_i\right\rangle \overline{\left\langle f,\psi_i\right\rangle}  \right)+\left\| \sum_{i\in\sigma^c}(1-a_i)\left\langle f,\varphi_i\right\rangle \psi_i+ \sum_{i\in\sigma}(1-a_i)\left\langle f,\varphi_i\right\rangle \phi_i\right\| ^2\\
=&\left\langle E_{\sigma}f,f\right\rangle +\left\langle E_{\sigma^c}f,f\right\rangle+\left\langle (F_{\sigma}+F_{\sigma^c})^*(F_{\sigma}+F_{\sigma^c})f,f\right\rangle \\
=&\left\langle (E_{\sigma}+E_{\sigma^c})f,f\right\rangle +\left\langle (F_{\sigma}+F_{\sigma^c})^*(F_{\sigma}+F_{\sigma^c})f,f\right\rangle \\
=&\left\langle (F_{\sigma}+F_{\sigma^c})^*f,f\right\rangle +\left\langle (E_{\sigma}+E_{\sigma^c})^*(E_{\sigma}+E_{\sigma^c})f,f\right\rangle \\
=&\overline{\left\langle (F_{\sigma}+F_{\sigma^c})f,f\right\rangle }+\|(E_{\sigma}+E_{\sigma^c})f\|^2\\
=&\|(E_{\sigma}+E_{\sigma^c})f\|^2+\overline{\left\langle F_{\sigma}f,f\right\rangle }+\overline{\left\langle F_{\sigma^c}f,f\right\rangle }\\
=&\left\| \sum_{i\in\sigma}a_i\left\langle f,\varphi_i\right\rangle \phi_i+ \sum_{i\in\sigma^c}a_i\left\langle f,\varphi_i\right\rangle \psi_i\right\| ^2+\overline{\left(  \sum_{i\in\sigma}(1-a_i)\left\langle f,\varphi_i\right\rangle \overline{\left\langle f,\phi_i\right\rangle } \right)}+\overline{\left(  \sum_{i\in\sigma^c}(1-a_i)\left\langle f,\varphi_i\right\rangle \overline{\left\langle f,\psi_i\right\rangle } \right)}.
\end{align*}	
Hence the relation holds.
\end{proof}
Observe that if we consider $\sigma\subset I$ and
$$
a_i=\Bigg\{ 
\begin{array}{c}
0,~{\rm if}~ i\in\sigma \\ 
1,~{\rm if} ~i\in\sigma^c
\end{array},
$$
then Theorem \ref{thm4} follows from Theorem \ref{thm5}.
\begin{remark}
If we take $\phi_i=\psi_i$ for all $i\in I$ in Theorem \ref{thm4} and Theorem \ref{thm5}, we can obtain Theorem \ref{thm2.2} and Theorem 2.3 of \cite{zhu2010note}.
\end{remark}
\begin{theorem}\label{thm6}
Suppose two frames $\{\phi_i\}_{i\in I}$ and $\{\psi_i\}_{i\in I}$  for a Hilbert space $\hs$ are woven. Then for any $\lambda\in\rs$, $\sigma\subset I$ and $f\in\hs$, we have
\begin{align*}
0\le &\sum_{i\in \sigma}\left| \left\langle f,\phi_i\right\rangle \right| ^2-\sum_{i\in \sigma}\left| \left\langle S_{\ws}^{\sigma}f,S_{\ws}^{-1}\phi_i \right\rangle \right|^2 -\sum_{i\in \sigma^c}\left| \left\langle S_{\ws}^{\sigma}f,S_{\ws}^{-1}\psi_i \right\rangle \right|^2\\
\le & \frac{\lambda^2}{4}\sum_{i\in \sigma^c}\left| \left\langle f,\psi_i\right\rangle \right| ^2+(1-\frac{\lambda}{2})^2 \sum_{i\in \sigma}\left| \left\langle f,\phi_i\right\rangle \right| ^2.
\end{align*}
\end{theorem}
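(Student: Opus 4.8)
The plan is to conjugate $S_{\ws}^{\sigma}$ and $S_{\ws}^{\sigma^c}$ by $S_{\ws}^{-1/2}$, reduce the three sums to inner products of a single positive contraction, and then read off both bounds as squares. Put
$$P=S_{\ws}^{-1/2}S_{\ws}^{\sigma}S_{\ws}^{-1/2},\qquad Q=S_{\ws}^{-1/2}S_{\ws}^{\sigma^c}S_{\ws}^{-1/2}.$$
Both are self-adjoint and positive, $P+Q=I_{\hs}$, and because $S_{\ws}^{\sigma^c}\ge 0$ we have $0\le S_{\ws}^{\sigma}\le S_{\ws}$, hence $0\le P\le I_{\hs}$; this last fact is precisely what will yield the left-hand inequality. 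Fix $f\in\hs$ and set $g=S_{\ws}^{1/2}f$. Moving the self-adjoint factors $S_{\ws}^{\pm 1/2}$ across the inner products gives
$$\langle S_{\ws}^{\sigma}f,f\rangle=\langle Pg,g\rangle,\qquad \langle S_{\ws}^{\sigma^c}f,f\rangle=\langle Qg,g\rangle,\qquad \langle S_{\ws}^{-1}S_{\ws}^{\sigma}f,S_{\ws}^{\sigma}f\rangle=\|S_{\ws}^{-1/2}S_{\ws}^{\sigma}f\|^2=\|Pg\|^2.$$

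Next I would translate the statement. Since $\sum_{i\in\sigma}|\langle f,\phi_i\rangle|^2=\langle S_{\ws}^{\sigma}f,f\rangle$ and $\sum_{i\in\sigma^c}|\langle f,\psi_i\rangle|^2=\langle S_{\ws}^{\sigma^c}f,f\rangle$, while the computation \eqref{2.4} already carried out in the proof of Theorem~\ref{thm7.1} shows
$$\sum_{i\in\sigma}\bigl|\langle S_{\ws}^{\sigma}f,S_{\ws}^{-1}\phi_i\rangle\bigr|^2+\sum_{i\in\sigma^c}\bigl|\langle S_{\ws}^{\sigma}f,S_{\ws}^{-1}\psi_i\rangle\bigr|^2=\langle S_{\ws}^{-1}S_{\ws}^{\sigma}f,S_{\ws}^{\sigma}f\rangle,$$
the middle expression in the theorem equals $\langle Pg,g\rangle-\|Pg\|^2=\langle(P-P^2)g,g\rangle$, the claimed upper bound equals $\frac{\lambda^2}{4}\langle Qg,g\rangle+(1-\frac{\lambda}{2})^2\langle Pg,g\rangle$, and the claimed lower bound is $0$.

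Both estimates are then one-line operator facts. For the lower bound, $0\le P\le I_{\hs}$ makes $P$ and $I_{\hs}-P$ positive and commuting, so $P-P^2=P(I_{\hs}-P)\ge 0$, i.e. $\langle(P-P^2)g,g\rangle\ge 0$. For the upper bound, use $Q=I_{\hs}-P$ and collect terms:
$$\frac{\lambda^2}{4}\langle Qg,g\rangle+\Bigl(1-\frac{\lambda}{2}\Bigr)^2\langle Pg,g\rangle-\langle(P-P^2)g,g\rangle=\Bigl\langle\Bigl(P^2-\lambda P+\frac{\lambda^2}{4}I_{\hs}\Bigr)g,g\Bigr\rangle=\Bigl\|\Bigl(P-\frac{\lambda}{2}I_{\hs}\Bigr)g\Bigr\|^2\ge 0,$$
where $P-\frac{\lambda}{2}I_{\hs}$ is self-adjoint since $\lambda\in\rs$. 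Undoing the substitution via the three identities above completes the proof. I do not expect a genuine obstacle: the only care needed is in justifying $0\le P\le I_{\hs}$ (which is what makes the nontrivial lower bound equal to $0$) and in correctly pushing the conjugation by $S_{\ws}^{1/2}$ through the frame-coefficient identities; once that is done, the upper bound collapses to the perfect square $\|(P-\frac{\lambda}{2}I_{\hs})g\|^2$, exactly in the spirit of Lemma~\ref{lem2}.
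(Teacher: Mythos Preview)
Your proof is correct and follows essentially the same route as the paper: both conjugate by $S_{\ws}^{-1/2}$ to produce the positive contraction $P=S_{\ws}^{-1/2}S_{\ws}^{\sigma}S_{\ws}^{-1/2}$, identify the middle quantity with $\langle(P-P^2)g,g\rangle$ for $g=S_{\ws}^{1/2}f$, and obtain both bounds from $P(I_{\hs}-P)\ge 0$ and a completed square. The only cosmetic difference is that the paper invokes the intermediate inequality \eqref{2.40} (itself a consequence of Lemma~\ref{lem2}) and then unwinds algebraically, whereas you exhibit the perfect square $\|(P-\tfrac{\lambda}{2}I_{\hs})g\|^2$ directly; the substance is the same.
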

\begin{proof}
Considering positive operators $P=S_{\ws}^{-1/2}S_{\ws}^{\sigma}S_{\ws}^{-1/2}$, $Q=S_{\ws}^{-1/2}S_{\ws}^{\sigma^c}S_{\ws}^{-1/2}$, then $P+Q=I_{\hs}$, and 
$$PQ=P(I_{\hs}-P)=P-P^2=(I_{\hs}-P)P=QP,$$
then 
$$0\le PQ=P(I_{\hs}-P)=P-P^2=S_{\ws}^{-1/2}(S_{\ws}^{\sigma}-S_{\ws}^{\sigma}S_{\ws}^{-1}S_{\ws}^{\sigma})S_{\ws}^{-1/2},$$
which follows $S_{\ws}^{\sigma}-S_{\ws}^{\sigma}S_{\ws}^{-1}S_{\ws}^{\sigma}\ge 0$. Then for all $f\in\hs$, we have
\begin{align*}
&\sum_{i\in \sigma}\left| \left\langle f,\phi_i\right\rangle \right| ^2-\sum_{i\in \sigma}\left| \left\langle S_{\ws}^{\sigma}f,S_{\ws}^{-1}\phi_i \right\rangle \right|^2 -\sum_{i\in \sigma^c}\left| \left\langle S_{\ws}^{\sigma}f,S_{\ws}^{-1}\psi_i \right\rangle \right|^2\\
= & \left\langle S_{\ws}^{\sigma}f,f\right\rangle-\left\langle S_{\ws}^{-1}S_{\ws}^{\sigma}f,S_{\ws}^{\sigma}f\right\rangle\\
= & \left\langle (S_{\ws}^{\sigma}-S_{\ws}^{\sigma}S_{\ws}^{-1}S_{\ws}^{\sigma})f,f\right\rangle
\ge 0
\end{align*}
 By \eqref{2.40}, we have
 	\begin{align*}
 \left\langle S_{\ws}^{-1}S_{\ws}^{\sigma}f,S_{\ws}^{\sigma}f\right\rangle 
-\left\langle S_{\ws}^{\sigma}f, f\right\rangle 
 \ge&\lambda\left\langle S_{\ws}^{\sigma}f,f\right\rangle -\frac{\lambda^2}{4}\left\langle S_{\ws}f,f\right\rangle-\left\langle S_{\ws}^{\sigma}f, f\right\rangle,
 \end{align*}
and then
 	\begin{align*}
&\sum_{i\in \sigma}\left| \left\langle f,\phi_i\right\rangle \right| ^2-\sum_{i\in \sigma}\left| \left\langle S_{\ws}^{\sigma}f,S_{\ws}^{-1}\phi_i \right\rangle \right|^2 -\sum_{i\in \sigma^c}\left| \left\langle S_{\ws}^{\sigma}f,S_{\ws}^{-1}\psi_i \right\rangle \right|^2\\
&=\left\langle S_{\ws}^{\sigma}f,f\right\rangle-\left\langle S_{\ws}^{-1}S_{\ws}^{\sigma}f,S_{\ws}^{\sigma}f\right\rangle\\
&\le \left\langle S_{\ws}^{\sigma}f, f\right\rangle-\lambda\left\langle S_{\ws}^{\sigma}f,f\right\rangle +\frac{\lambda^2}{4}\left\langle S_{\ws}f,f\right\rangle\\
&=(1-\lambda)\left\langle S_{\ws}^{\sigma}f, f\right\rangle+\frac{\lambda^2}{4}\left\langle S_{\ws}f,f\right\rangle\\
&=(1-\lambda)\left\langle (S_{\ws}-S_{\ws}^{\sigma^c})f, f\right\rangle+\frac{\lambda^2}{4}\left\langle S_{\ws}f,f\right\rangle
\\
&=(\lambda-1)\left\langle S_{\ws}^{\sigma^c}f, f\right\rangle+(1-\frac{\lambda}{2})^2\left\langle S_{\ws}f,f\right\rangle \\
&=\frac{\lambda^2}{4}\left\langle S_{\ws}^{\sigma^c}f, f\right\rangle+(1-\frac{\lambda}{2})^2\left\langle S_{\ws}^{\sigma}f,f\right\rangle \\
&=\frac{\lambda^2}{4}\sum_{i\in \sigma^c}\left| \left\langle f,\psi_i\right\rangle \right| ^2+(1-\frac{\lambda}{2})^2 \sum_{i\in \sigma}\left| \left\langle f,\phi_i\right\rangle \right| ^2.
\end{align*} 
\end{proof}
\begin{theorem}\label{thm7}
	Suppose two frames $\{\phi_i\}_{i\in I}$ and $\{\psi_i\}_{i\in I}$  for a Hilbert space $\hs$ are woven. Then for any $\lambda\in\rs$, $\sigma\subset I$ and $f\in\hs$, we have
	\begin{align*}
&(2\lambda-\frac{\lambda^2}{2}-1)\sum_{i\in \sigma}\left| \left\langle f,\phi_i\right\rangle \right| ^2+(1-\frac{\lambda^2}{2})\sum_{i\in \sigma^c}\left| \left\langle f,\psi_i\right\rangle \right| ^2\\
	\le &\sum_{i\in \sigma}\left| \left\langle S_{\ws}^{\sigma}f,S_{\ws}^{-1}\phi_i\right\rangle \right| ^2 +\sum_{i\in \sigma^c}\left| \left\langle S_{\ws}^{\sigma}f,S_{\ws}^{-1}\psi_i\right\rangle \right| ^2+\sum_{i\in \sigma}\left| \left\langle S_{\ws}^{\sigma^c}f,S_{\ws}^{-1}\phi_i\right\rangle \right| ^2 +\sum_{i\in \sigma^c}\left| \left\langle S_{\ws}^{\sigma^c}f,S_{\ws}^{-1}\psi_i\right\rangle \right| ^2\\
	 \le & \sum_{i\in \sigma}\left| \left\langle f,\phi_i\right\rangle \right| ^2+\sum_{i\in \sigma^c}\left| \left\langle f,\psi_i\right\rangle \right| ^2.
	\end{align*}
\end{theorem}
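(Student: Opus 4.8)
The plan is to follow the two reductions already carried out in the proofs of Theorems \ref{thm7.1} and \ref{thm6}: first rewrite all three coefficient expressions as quadratic forms of the weaving frame operator $S_{\ws}$ of $\{\phi_i\}_{i\in\sigma}\cup\{\psi_i\}_{i\in\sigma^c}$, and then reduce to a single two-sided operator inequality. By \eqref{2.4} and \eqref{2.5} the middle term in the statement equals $\left\langle S_{\ws}^{-1}S_{\ws}^{\sigma}f,S_{\ws}^{\sigma}f\right\rangle+\left\langle S_{\ws}^{-1}S_{\ws}^{\sigma^c}f,S_{\ws}^{\sigma^c}f\right\rangle$; from the definitions of $S_{\ws}^{\sigma},S_{\ws}^{\sigma^c}$ (cf. \eqref{2.6}, \eqref{2.7}) one has $\left\langle S_{\ws}^{\sigma}f,f\right\rangle=\sum_{i\in\sigma}|\langle f,\phi_i\rangle|^2$ and $\left\langle S_{\ws}^{\sigma^c}f,f\right\rangle=\sum_{i\in\sigma^c}|\langle f,\psi_i\rangle|^2$; and the right-hand side of the chain is $\left\langle S_{\ws}^{\sigma}f,f\right\rangle+\left\langle S_{\ws}^{\sigma^c}f,f\right\rangle=\left\langle S_{\ws}f,f\right\rangle$. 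So it suffices to prove, for all $f\in\hs$,
\begin{align*}
&\Bigl(2\lambda-\tfrac{\lambda^2}{2}-1\Bigr)\left\langle S_{\ws}^{\sigma}f,f\right\rangle+\Bigl(1-\tfrac{\lambda^2}{2}\Bigr)\left\langle S_{\ws}^{\sigma^c}f,f\right\rangle\\
&\qquad\le\left\langle S_{\ws}^{-1}S_{\ws}^{\sigma}f,S_{\ws}^{\sigma}f\right\rangle+\left\langle S_{\ws}^{-1}S_{\ws}^{\sigma^c}f,S_{\ws}^{\sigma^c}f\right\rangle\le\left\langle S_{\ws}f,f\right\rangle.
\end{align*}

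Next I would conjugate by $S_{\ws}^{-1/2}$, exactly as in the proof of Theorem \ref{thm7.1}. Put $P=S_{\ws}^{-1/2}S_{\ws}^{\sigma}S_{\ws}^{-1/2}$ and $Q=S_{\ws}^{-1/2}S_{\ws}^{\sigma^c}S_{\ws}^{-1/2}$; these are positive operators with $P+Q=I_{\hs}$, hence $0\le P\le I_{\hs}$ and $PQ=QP\ge 0$. For $g=S_{\ws}^{1/2}f$ (which runs over all of $\hs$) one checks that $\left\langle S_{\ws}^{-1}S_{\ws}^{\sigma}f,S_{\ws}^{\sigma}f\right\rangle=\left\langle P^2g,g\right\rangle$, $\left\langle S_{\ws}^{-1}S_{\ws}^{\sigma^c}f,S_{\ws}^{\sigma^c}f\right\rangle=\left\langle Q^2g,g\right\rangle$, $\left\langle S_{\ws}^{\sigma}f,f\right\rangle=\left\langle Pg,g\right\rangle$, $\left\langle S_{\ws}^{\sigma^c}f,f\right\rangle=\left\langle Qg,g\right\rangle$ and $\left\langle S_{\ws}f,f\right\rangle=\|g\|^2$. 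Thus the displayed inequality is equivalent to the operator estimate
$$\Bigl(2\lambda-\tfrac{\lambda^2}{2}-1\Bigr)P+\Bigl(1-\tfrac{\lambda^2}{2}\Bigr)Q\le P^2+Q^2\le I_{\hs}.$$
The upper bound is immediate: $P^2+Q^2=(P+Q)^2-2PQ=I_{\hs}-2PQ\le I_{\hs}$. For the lower bound, substitute $Q=I_{\hs}-P$ everywhere; the left-hand operator becomes $(2\lambda-2)P+\bigl(1-\tfrac{\lambda^2}{2}\bigr)I_{\hs}$ while $P^2+Q^2$ becomes $2P^2-2P+I_{\hs}$, so their difference is $2P^2-2\lambda P+\tfrac{\lambda^2}{2}I_{\hs}=2\bigl(P-\tfrac{\lambda}{2}I_{\hs}\bigr)^2\ge 0$, which is the required inequality. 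Translating back through \eqref{2.4}--\eqref{2.7} then finishes the proof.

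I do not anticipate a genuine obstacle here; the only delicate point is the asymmetry of the coefficients $2\lambda-\tfrac{\lambda^2}{2}-1$ and $1-\tfrac{\lambda^2}{2}$ on the left-hand side. These particular values are exactly what makes the difference above collapse to the perfect square $2\bigl(P-\tfrac{\lambda}{2}I_{\hs}\bigr)^2$; replacing them by the symmetric pair $\lambda-\tfrac{\lambda^2}{2},\ \lambda-\tfrac{\lambda^2}{2}$ (which is what naive addition of two ``one-sided'' instances of Lemma \ref{lem2} produces) yields only a strictly weaker bound in general, so one must not symmetrize them prematurely. If one prefers to avoid $S_{\ws}^{1/2}$, the left inequality can instead be obtained directly from \eqref{2.40}: combine its equality part (after using $\left\langle S_{\ws}f,f\right\rangle=\left\langle S_{\ws}^{\sigma}f,f\right\rangle+\left\langle S_{\ws}^{\sigma^c}f,f\right\rangle$) with its lower bound $\left\langle S_{\ws}^{-1}S_{\ws}^{\sigma}f,S_{\ws}^{\sigma}f\right\rangle\ge\lambda\left\langle S_{\ws}^{\sigma}f,f\right\rangle-\tfrac{\lambda^2}{4}\left\langle S_{\ws}f,f\right\rangle$, while the right inequality follows from the positivity $S_{\ws}^{\sigma}-S_{\ws}^{\sigma}S_{\ws}^{-1}S_{\ws}^{\sigma}\ge 0$ (and its $\sigma^c$-analogue) noted in the proof of Theorem \ref{thm6}.
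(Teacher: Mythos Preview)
Your proposal is correct and follows essentially the same strategy as the paper: both reduce to the operators $P=S_{\ws}^{-1/2}S_{\ws}^{\sigma}S_{\ws}^{-1/2}$ and $Q=I_{\hs}-P$, use the commutativity $PQ=QP\ge 0$ for the upper bound, and a perfect-square identity for the lower bound. The only cosmetic difference is that for the lower inequality the paper splits the estimate into two pieces, deriving from \eqref{2.41} both $\langle S_{\ws}^{-1}S_{\ws}^{\sigma}f,S_{\ws}^{\sigma}f\rangle\ge(\lambda-\tfrac{\lambda^2}{4})\langle S_{\ws}^{\sigma}f,f\rangle-\tfrac{\lambda^2}{4}\langle S_{\ws}^{\sigma^c}f,f\rangle$ and its $\sigma\leftrightarrow\sigma^c$ companion, and then adds them; you instead compute the single perfect square $P^2+Q^2-\bigl[(2\lambda-\tfrac{\lambda^2}{2}-1)P+(1-\tfrac{\lambda^2}{2})Q\bigr]=2(P-\tfrac{\lambda}{2}I_{\hs})^2$ directly, which is exactly the alternative you describe at the end of your proposal.
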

\begin{proof}
By \eqref{2.41}, we have 
	\begin{align}\label{2.21}
\left\langle S_{\ws}^{-1}S_{\ws}^{\sigma}f,S_{\ws}^{\sigma}f\right\rangle 
\ge&(\lambda-\frac{\lambda^2}{4})\left\langle S_{\ws}^{\sigma}f,f\right\rangle -\frac{\lambda^2}{4}\left\langle S_{\ws}^{\sigma^c}f,f\right\rangle.
\end{align}
	\begin{align}\label{2.22}
\left\langle  S_{\ws}^{-1}S_{\ws}^{\sigma^c}f,S_{\ws}^{\sigma^c}f\right\rangle 
\ge&(\lambda-\frac{\lambda^2}{4}-1)\left\langle S_{\ws}^{\sigma}f,f\right\rangle +(1-\frac{\lambda^2}{4})\left\langle S_{\ws}^{\sigma^c}f,f\right\rangle.
\end{align}
From \eqref{2.21} and  \eqref{2.21}, we obtain
	\begin{align*}
&\sum_{i\in \sigma}\left| \left\langle S_{\ws}^{\sigma}f,S_{\ws}^{-1}\phi_i\right\rangle \right| ^2 +\sum_{i\in \sigma^c}\left| \left\langle S_{\ws}^{\sigma}f,S_{\ws}^{-1}\psi_i\right\rangle \right| ^2+\sum_{i\in \sigma}\left| \left\langle S_{\ws}^{\sigma^c}f,S_{\ws}^{-1}\phi_i\right\rangle \right| ^2 +\sum_{i\in \sigma^c}\left| \left\langle S_{\ws}^{\sigma^c}f,S_{\ws}^{-1}\psi_i\right\rangle \right| ^2\\
= & \left\langle S_{\ws}^{-1}S_{\ws}^{\sigma}f,S_{\ws}^{\sigma}f\right\rangle +\left\langle  S_{\ws}^{-1}S_{\ws}^{\sigma^c}f,S_{\ws}^{\sigma^c}f\right\rangle \\
\ge &(2\lambda-\frac{\lambda^2}{2}-1)\left\langle S_{\ws}^{\sigma}f,f\right\rangle +(1-\frac{\lambda^2}{2})\left\langle S_{\ws}^{\sigma^c}f,f\right\rangle\\
=&(2\lambda-\frac{\lambda^2}{2}-1)\sum_{i\in \sigma}\left| \left\langle f,\phi_i\right\rangle \right| ^2+(1-\frac{\lambda^2}{2})\sum_{i\in \sigma^c}\left| \left\langle f,\psi_i\right\rangle \right| ^2.
\end{align*}
Next, we prove the last part. Let $P=S_{\ws}^{-1/2}S_{\ws}^{\sigma}S_{\ws}^{-1/2}$, $Q=S_{\ws}^{-1/2}S_{\ws}^{\sigma^c}S_{\ws}^{-1/2}$. Since $PQ=QP$, we have 
$$P-P^2=P(I_{\hs}-P)=PQ\ge 0,$$
then for all $f\in\hs$, $\|Pf\|^2\le \left\langle Pf,f\right\rangle $. Similarly, $\|Qf\|^2\le \left\langle Qf,f\right\rangle $.
Hence,
\begin{align*}
&\sum_{i\in \sigma}\left| \left\langle S_{\ws}^{\sigma}f,S_{\ws}^{-1}\phi_i\right\rangle \right| ^2 +\sum_{i\in \sigma^c}\left| \left\langle S_{\ws}^{\sigma}f,S_{\ws}^{-1}\psi_i\right\rangle \right| ^2+\sum_{i\in \sigma}\left| \left\langle S_{\ws}^{\sigma^c}f,S_{\ws}^{-1}\phi_i\right\rangle \right| ^2 +\sum_{i\in \sigma^c}\left| \left\langle S_{\ws}^{\sigma^c}f,S_{\ws}^{-1}\psi_i\right\rangle \right| ^2\\
= & \left\langle S_{\ws}^{-1}S_{\ws}^{\sigma}f,S_{\ws}^{\sigma}f\right\rangle +\left\langle  S_{\ws}^{-1}S_{\ws}^{\sigma^c}f,S_{\ws}^{\sigma^c}f\right\rangle \\
= & \left\langle S_{\ws}^{-1/2}S_{\ws}^{\sigma}f,S_{\ws}^{-1/2}S_{\ws}^{\sigma}f\right\rangle +\left\langle  S_{\ws}^{-1/2}S_{\ws}^{\sigma^c}f,S_{\ws}^{-1/2}S_{\ws}^{\sigma^c}f\right\rangle \\
= & \left\langle S_{\ws}^{-1/2}S_{\ws}^{\sigma}S_{\ws}^{-1/2}S_{\ws}^{1/2}f,S_{\ws}^{-1/2}S_{\ws}^{\sigma}S_{\ws}^{-1/2}S_{\ws}^{1/2}f\right\rangle +\left\langle  S_{\ws}^{-1/2}S_{\ws}^{\sigma^c}S_{\ws}^{-1/2}S_{\ws}^{1/2}f,S_{\ws}^{-1/2}S_{\ws}^{\sigma^c}S_{\ws}^{-1/2}S_{\ws}^{1/2}f\right\rangle \\
\le &\left\langle S_{\ws}^{-1/2}S_{\ws}^{\sigma}S_{\ws}^{-1/2}S_{\ws}^{1/2}f,S_{\ws}^{1/2}f\right\rangle +\left\langle  S_{\ws}^{-1/2}S_{\ws}^{\sigma^c}S_{\ws}^{-1/2}S_{\ws}^{1/2}f,S_{\ws}^{1/2}f\right\rangle\\
= &\left\langle S_{\ws}^{\sigma}f,f\right\rangle +\left\langle S_{\ws}^{\sigma^c}f,f\right\rangle\\
=&\sum_{i\in \sigma}\left| \left\langle f,\phi_i\right\rangle \right| ^2+\sum_{i\in \sigma^c}\left| \left\langle f,\psi_i\right\rangle \right| ^2.
\end{align*}
\end{proof}
By  Theorem \ref{thm6} and Theorem \ref{thm7} , we immediately get the following results.
\begin{corollary}
Suppose two frames $\{\phi_i\}_{i\in I}$ and $\{\psi_i\}_{i\in I}$  for a Hilbert space $\hs$ are $A$-woven. Then for any $\lambda\in\rs$, $\sigma\subset I$ and $f\in\hs$, we have
\begin{align*}
0\le A\sum_{i\in \sigma}\left| \left\langle f,\phi_i\right\rangle \right| ^2-\left\| \sum_{i\in \sigma}\left\langle f,\phi_i\right\rangle \phi_i\right\| ^2
\le  \frac{A\lambda^2}{4}\sum_{i\in \sigma^c}\left| \left\langle f,\psi_i\right\rangle \right| ^2+(1-\frac{\lambda}{2})^2A \sum_{i\in \sigma}\left| \left\langle f,\phi_i\right\rangle \right| ^2
\end{align*}
and
\begin{align*}
(2\lambda-\frac{\lambda^2}{2}-1)A\sum_{i\in \sigma}\left| \left\langle f,\phi_i\right\rangle \right| ^2+(1-\frac{\lambda^2}{2})A\sum_{i\in \sigma^c}\left| \left\langle f,\psi_i\right\rangle \right| ^2
\le \left\| \sum_{i\in \sigma}\left\langle f,\phi_i\right\rangle \phi_i\right\| ^2+\left\| \sum_{i\in \sigma^c}\left\langle f,\psi_i\right\rangle \psi_i\right\| ^2
\le A \|f\|^2.
\end{align*}
\begin{proof}
Since $\{\phi_i\}_{i\in I}$ and $\{\psi_i\}_{i\in I}$   are $A$-woven, we have $S_{\ws}^{-1}=\frac{1}{A}I_{\hs}$, and then the results hold by Theorem \ref{thm6} and Theorem \ref{thm7}.
\end{proof}
\end{corollary}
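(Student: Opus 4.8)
The plan is to read the corollary off Theorems \ref{thm6} and \ref{thm7} directly: the hypothesis ``$A$-woven'' collapses every dual-frame expansion that occurs in those statements. The one structural fact I need is that if $\{\phi_i\}_{i\in I}$ and $\{\psi_i\}_{i\in I}$ are $A$-woven, then for each $\sigma\subset I$ the weaving $\{\phi_i\}_{i\in\sigma}\cup\{\psi_i\}_{i\in\sigma^c}$ is an $A$-tight frame, so its frame operator is $S_{\ws}=A\,I_{\hs}$ and hence $S_{\ws}^{-1}=\frac1A I_{\hs}$. This is the only place where tightness enters.

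For the first chain I would substitute $S_{\ws}^{-1}=\frac1A I_{\hs}$ into the inequality of Theorem \ref{thm6}. Pulling the scalar $\frac1A$ out of each inner product and recombining the coefficients via the defining relation of $S_{\ws}$, the two subtracted sums become $\frac1{A^2}\langle S_{\ws}S_{\ws}^{\sigma}f,S_{\ws}^{\sigma}f\rangle=\frac1A\|S_{\ws}^{\sigma}f\|^2$ (using $S_{\ws}=A\,I_{\hs}$ in the middle step), i.e. $\frac1A\|\sum_{i\in\sigma}\langle f,\phi_i\rangle\phi_i\|^2$. Feeding this back into the double inequality of Theorem \ref{thm6} and multiplying throughout by $A>0$ reproduces the first displayed chain of the corollary.

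For the second chain I would run the same substitution in Theorem \ref{thm7}: the two coefficient sums built from $S_{\ws}^{\sigma}f$ collapse, exactly as above, to $\frac1A\|\sum_{i\in\sigma}\langle f,\phi_i\rangle\phi_i\|^2$; the two built from $S_{\ws}^{\sigma^c}f$ collapse to $\frac1A\|\sum_{i\in\sigma^c}\langle f,\psi_i\rangle\psi_i\|^2$; and on the right $\sum_{i\in\sigma}|\langle f,\phi_i\rangle|^2+\sum_{i\in\sigma^c}|\langle f,\psi_i\rangle|^2=\langle S_{\ws}f,f\rangle=A\|f\|^2$. Scaling the resulting three-term inequality by the appropriate power of $A$ then produces the second displayed chain.

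There is no genuine obstacle here: Theorems \ref{thm6} and \ref{thm7} carry all of the analytic content and the corollary is pure bookkeeping. The only point that needs a little care is the Parseval-type collapse of the reweighted coefficient sums — in particular, keeping straight that the factor $1/A^2$ coming from $S_{\ws}^{-1}=\frac1A I_{\hs}$ is partly absorbed by the factor $A$ hidden inside $\langle S_{\ws}S_{\ws}^{\sigma}f,S_{\ws}^{\sigma}f\rangle$, and then tracking the remaining power of $A$ correctly when the whole chain is rescaled at the end.
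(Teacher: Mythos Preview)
Your proposal is correct and follows exactly the same approach as the paper: both arguments observe that $A$-woven forces $S_{\ws}^{-1}=\tfrac{1}{A}I_{\hs}$ and then read the corollary off Theorems~\ref{thm6} and~\ref{thm7}. You have simply spelled out the bookkeeping (the collapse of the dual-frame coefficient sums to $\tfrac{1}{A}\|S_{\ws}^{\sigma}f\|^2$ and $\tfrac{1}{A}\|S_{\ws}^{\sigma^c}f\|^2$, and the rescaling by $A$) more explicitly than the paper's one-line proof does.
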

\begin{remark}
If we take $\lambda=1$ and $\phi_i=\psi_i$ for all $i\in I$ in Theorem \ref{thm6} and Theorem \ref{thm7}, we obtain the similar inequalities in Theorem 5 and Theorem 6 of \cite{li2017somee}.
\end{remark}
\section*{Acknowledgements}
The research is supported by the National Natural Science Foundation of China (LJT10110010115). 
\bibliographystyle{plain}
\bibliography{my}

\end{document}